
\documentclass[leqno,12pt]{amsart}
\usepackage{amsthm}
\usepackage{amssymb}
\usepackage{verbatim}
\usepackage{enumerate}
\usepackage{needspace}

\numberwithin{equation}{section}

\theoremstyle{plain}
\newtheorem{thm}[equation]{Theorem}

\newtheorem{cor}[equation]{Corollary}
\newtheorem{lemma}[equation]{Lemma}

\theoremstyle{definition}

\newtheorem{remark}[equation]{Remark}

\newcommand{\F}{\mathbb F}

\title[Permutation polynomials]{Permutation polynomials induced from permutations of subfields, and some
complete sets of mutually orthogonal latin squares}

\author{Michael E. Zieve}
\address{
  Department of Mathematics,
  University of Michigan,
  Ann Arbor, MI 48109--1043,
  USA
}
\address{Mathematical Sciences Center, Tsinghua University, Beijing 100084, China}
\email{zieve@umich.edu}
\urladdr{www.math.lsa.umich.edu/$\sim$zieve/}

\thanks{The author thanks Xiwang Cao and Baofeng Wu for comments on an earlier version of this paper, and the NSF for support under grant DMS-1162181.}


\begin{document}

\begin{abstract}
We present a general technique for obtaining permutation polynomials over a finite field from permutations of a
subfield.  By applying this technique to the simplest classes of permutation polynomials on the subfield, we obtain several new
families of permutation polynomials.  Some of these have the additional property that both
$f(x)$ and $f(x)+x$ induce permutations of the field, which has combinatorial consequences.  We use some of our permutation
polynomials to exhibit complete sets of mutually orthogonal latin squares.  In addition, we solve the open problem from a
recent paper by Wu and Lin, and we give simpler proofs of much more general
versions of the results in two other recent papers.
\end{abstract}

\maketitle

\section{Introduction}

A \textit{complete mapping} of a group $G$ is a permutation $\phi$ of $G$ for which the map
$\alpha\mapsto\alpha\phi(\alpha)$ is also a permutation of $G$.
Complete mappings were introduced by Mann \cite{Mann}, in the context of constructing orthogonal latin squares.
Complete mappings were subsequently shown to have several other combinatorial consequences,
including neofields, Bol loops, and partitions of $G$ for which the partial products have certain properties
\cite{FGM,FGT,NR,Paige}.
In the past few decades, several authors have studied complete mappings on the additive group of a finite field $\F_q$.
Since every function $\F_q\to\F_q$ can be written as $\alpha\mapsto f(\alpha)$ for some polynomial $f(x)\in\F_q[x]$, 
complete mappings on $\F_q$ can be studied in terms of polynomials, which facilitates the use of algebraic techniques.

We use the following terminology: a \emph{permutation polynomial} over $\F_q$ is a polynomial $f(x)\in\F_q[x]$ for which the function
$\alpha\mapsto f(\alpha)$ defines a permutation of $\F_q$.  A \emph{complete permutation polynomial} over $\F_q$
is a polynomial $f(x)\in\F_q[x]$ for which the function $\alpha\mapsto f(\alpha)$ defines a complete mapping on $\F_q$; in other words, both $f(x)$ and $f(x)+x$ are permutation polynomials over $\F_q$.
Recently several papers have been written exhibiting families of permutation polynomials, some of which are complete.
Many of the results in these papers are easy consequences of a general result (Lemma~\ref{lem}) which reduces the question
of whether a certain type of polynomial permutes $\F_q$ to the question of whether a different polynomial permutes some
subgroup of $\F_q^*$.

In this note we show that this method applies especially well in case the subgroup of $\F_q^*$ is the multiplicative group
of a subfield $\F_Q$ of $\F_q$.  In this case we can use Lemma~\ref{lem} to produce new permutation polynomials over $\F_q$
from known permutation polynomials over $\F_Q$.  Our construction is based on the following result:

\begin{thm} \label{main}
Pick $h\in\F_q[x]$ where $q=Q^m$, and let $r$ be a positive integer.  Then $x^r h(x^{(q-1)/(Q-1)})$ permutes\/ $\F_q$
if and only if
\begin{enumerate}
\item $\gcd(r,(q-1)/(Q-1))=1$ \,\text{ and}
\item $g(x):=x^r h(x) h^{(Q)}(x) h^{(Q^2)}(x) \dots h^{(Q^{m-1})}(x)$ permutes\/ $\F_Q$,
\end{enumerate}
where $h^{(Q^i)}(x)$ denotes the polynomial obtained from $h(x)$ by raising every coefficient to the $Q^i$-th power.
\end{thm}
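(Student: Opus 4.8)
The plan is to invoke Lemma~\ref{lem} and then recognize the resulting subgroup condition as condition (2). Since $Q-1$ divides $q-1=Q^m-1$, the integer $d:=(q-1)/(Q-1)$ divides $q-1$, and the complementary divisor $(q-1)/d$ equals $Q-1$; the unique subgroup of $\F_q^*$ of order $Q-1$ is exactly $\F_Q^*$. So I would first apply Lemma~\ref{lem} to $x^r h(x^d)$ with this value of $d$: it says that $x^r h(x^{(q-1)/(Q-1)})$ permutes $\F_q$ if and only if $\gcd(r,(q-1)/(Q-1))=1$ — which is condition (1) — and the polynomial $G(x):=x^r h(x)^{(q-1)/(Q-1)}$ permutes $\F_Q^*$.

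It then remains to show that $G$ permutes $\F_Q^*$ precisely when $g$ permutes $\F_Q$. Because $r\ge 1$ we have $g(0)=0$, so $g$ permutes $\F_Q$ if and only if it permutes $\F_Q^*$; hence it is enough to check that $G$ and $g$ induce the same map on $\F_Q^*$. The key identity is $(q-1)/(Q-1)=1+Q+Q^2+\cdots+Q^{m-1}$, so for $\alpha\in\F_Q^*$
\[
G(\alpha)=\alpha^r h(\alpha)^{1+Q+\cdots+Q^{m-1}}=\alpha^r\prod_{i=0}^{m-1} h(\alpha)^{Q^i}.
\]
Now $x\mapsto x^{Q^i}$ is a field homomorphism on $\overline{\F_q}$ (as $Q$ is a power of $\Char\F_q$), so $h(\alpha)^{Q^i}=h^{(Q^i)}(\alpha^{Q^i})$; and $\alpha\in\F_Q$ gives $\alpha^{Q^i}=\alpha$, whence $h(\alpha)^{Q^i}=h^{(Q^i)}(\alpha)$. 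Substituting, $G(\alpha)=\alpha^r\prod_{i=0}^{m-1} h^{(Q^i)}(\alpha)=g(\alpha)$, as wanted. (In passing, applying Frobenius to the coefficients permutes the factors $h^{(Q^i)}$ cyclically, so in fact $g\in\F_Q[x]$, although we will not need this.)

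Combining the two steps gives the stated equivalence. I do not expect a genuine obstacle: essentially all the work is already in Lemma~\ref{lem}, and the one new idea — that $(q-1)/(Q-1)$ is the ``norm exponent'' $1+Q+\cdots+Q^{m-1}$, so that raising $h(\alpha)$ to this power is a product of Frobenius conjugates which collapse because $\alpha$ already lies in $\F_Q$ — is the short computation above. The only points needing a little care are the bookkeeping at $0$ (handled by $r\ge 1$) and the degenerate case in which $h$ vanishes somewhere on $\F_Q^*$: there $f$ is plainly not a permutation polynomial, and correspondingly $G$ fails to permute $\F_Q^*$ while $g$ fails to permute $\F_Q$, so nothing special is required — this is already subsumed by Lemma~\ref{lem} together with the identity $G|_{\F_Q^*}=g|_{\F_Q^*}$.
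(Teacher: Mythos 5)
Your argument is correct and is essentially the paper's own proof: apply Lemma~\ref{lem} with $s=Q-1$, identify the $(Q-1)$-th roots of unity with $\F_Q^*$, and use $(q-1)/(Q-1)=1+Q+\cdots+Q^{m-1}$ together with Frobenius to show $x^r h(x)^{(q-1)/(Q-1)}$ and $g$ agree on $\F_Q^*$. The only (harmless) addition is your explicit remark that $g(0)=0$ lets one pass between permuting $\F_Q^*$ and permuting $\F_Q$, which the paper leaves implicit.
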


We emphasize that the usefulness of this result is that permutation polynomials over $\F_{Q^m}$ are being constructed from
permutation polynomials over $\F_Q$.  This enables one to start with well-understood permutation
polynomials over $\F_Q$, such as $x^n$ or Dickson polynomials, and use them to construct new permutation polynomials
over $\F_{Q^m}$.  We also remark that $g(x)=x^r N(h(x))$, where $N$ denotes the norm relative to the field extension
$\F_q(x)/\F_Q(x)$; in particular, it follows that $g(x)\in\F_Q[x]$.

Theorem~\ref{main} becomes especially simple in
case $h(x)\in\F_Q[x]$:

\begin{cor} \label{maincor}
Pick any $h\in\F_Q[x]$, let $q=Q^m$, and let $r$ be a positive integer.  Then $x^r h(x^{(q-1)/(Q-1)})$ permutes\/ $\F_q$
if and only if
\begin{enumerate}
\item $\gcd(r,(q-1)/(Q-1))=1$ \,\text{ and}
\item $g(x):=x^r h(x)^m$ permutes\/ $\F_Q$.
\end{enumerate}
\end{cor}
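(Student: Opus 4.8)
The plan is to deduce Corollary~\ref{maincor} directly from Theorem~\ref{main} by specializing to the case $h\in\F_Q[x]$ and simplifying condition~(2). The key observation is that the Frobenius map $a\mapsto a^Q$ restricts to the identity on the subfield $\F_Q$, so if every coefficient of $h$ lies in $\F_Q$ then raising those coefficients to the $Q^i$-th power leaves them unchanged. In other words, $h^{(Q^i)}(x)=h(x)$ for each $i=0,1,\dots,m-1$.

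Granting this, the product appearing in condition~(2) of Theorem~\ref{main} becomes
\[
h(x)\,h^{(Q)}(x)\,h^{(Q^2)}(x)\dots h^{(Q^{m-1})}(x)=h(x)^m,
\]
so the polynomial $g(x)=x^r h(x) h^{(Q)}(x)\dots h^{(Q^{m-1})}(x)$ of Theorem~\ref{main} is exactly $g(x)=x^r h(x)^m$. Since condition~(1) is identical in the two statements, Theorem~\ref{main} now says precisely that $x^r h(x^{(q-1)/(Q-1)})$ permutes $\F_q$ if and only if $\gcd(r,(q-1)/(Q-1))=1$ and $x^r h(x)^m$ permutes $\F_Q$, which is the assertion of Corollary~\ref{maincor}.

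There is essentially no obstacle here: the only point requiring any care is the elementary fact that $a^{Q^i}=a$ for all $a\in\F_Q$, which is immediate from the fact that $\F_Q^*$ has order $Q-1$ (so $a^Q=a$ for every $a\in\F_Q$, including $a=0$) together with induction on $i$. One might also remark, as the text already notes, that this is consistent with the observation that $g(x)=x^r N(h(x))$ where $N$ is the norm for $\F_q(x)/\F_Q(x)$: when $h\in\F_Q[x]$ the extension acts trivially on $h$, so its norm is simply $h(x)^m$. No separate treatment of degenerate cases (such as $h$ constant or $h=0$) is needed, since in those cases both sides of the claimed equivalence are governed by whether $x^r h(x)^m$ permutes $\F_Q$, exactly as the general statement dictates.
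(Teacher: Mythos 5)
Your proof is correct and matches the paper's approach: the paper simply states that Corollary~\ref{maincor} ``follows at once from Theorem~\ref{main},'' and your observation that $h^{(Q^i)}=h$ for $h\in\F_Q[x]$ (so the product collapses to $h(x)^m$) is precisely the reason why.
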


Condition (2) can be simplified when $m$ is coprime to $Q-1$:

\begin{cor} \label{maincor2}
Pick any $h\in\F_Q[x]$, let $r,m,n$ be positive integers such that $mn\equiv 1\pmod{Q-1}$, and put $q=Q^m$.  Then $x^r h(x^{(q-1)/(Q-1)})$ permutes\/ $\F_q$
if and only if
\begin{enumerate}
\item $\gcd(r,(q-1)/(Q-1))=1$ \,\text{ and}
\item $g(x):=x^{rn} h(x)$ permutes\/ $\F_Q$.
\end{enumerate}
\end{cor}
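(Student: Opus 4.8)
The plan is to deduce Corollary~\ref{maincor2} directly from Corollary~\ref{maincor}. Since the two statements share condition~(1) verbatim, it suffices to show that, under the hypothesis $mn\equiv 1\pmod{Q-1}$, condition~(2) of Corollary~\ref{maincor} (that $x^r h(x)^m$ permutes $\F_Q$) is equivalent to condition~(2) of Corollary~\ref{maincor2} (that $x^{rn}h(x)$ permutes $\F_Q$).

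First I would record two elementary consequences of $mn\equiv 1\pmod{Q-1}$. One: $n$ is a unit modulo $Q-1$, so $\gcd(n,Q-1)=1$, and hence the $n$-th power map is a bijection of $\F_Q$, since it is an automorphism of the cyclic group $\F_Q^*$ and fixes $0$. Two: every $a\in\F_Q$ satisfies $a^{mn}=a$; this is immediate when $a=0$, and when $a\ne 0$ it follows from $a^{Q-1}=1$ together with $mn\equiv 1\pmod{Q-1}$.

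Next I would combine these. For each $\alpha\in\F_Q$ we have $\bigl(\alpha^r h(\alpha)^m\bigr)^n=\alpha^{rn}h(\alpha)^{mn}=\alpha^{rn}h(\alpha)$, applying the second observation with $a=h(\alpha)$. Thus the function $\F_Q\to\F_Q$ induced by $x^{rn}h(x)$ is the composition of the function induced by $x^r h(x)^m$ with the bijection $x\mapsto x^n$. Composing with a bijection does not change whether a map is a bijection, so $x^{rn}h(x)$ permutes $\F_Q$ if and only if $x^r h(x)^m$ does. Substituting this equivalence into Corollary~\ref{maincor} yields Corollary~\ref{maincor2}.

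There is no serious obstacle here; the one point requiring a moment's care is that the identity $h(\alpha)^{mn}=h(\alpha)$ must be verified separately at the zeros of $h$ in $\F_Q$, since the argument via $a^{Q-1}=1$ covers only nonzero $a$. Everything else is immediate once one views the passage from Corollary~\ref{maincor} to Corollary~\ref{maincor2} as simply twisting the test polynomial on $\F_Q$ by the $n$-th power map.
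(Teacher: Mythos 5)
Your proposal is correct and follows essentially the same route as the paper: the paper also deduces Corollary~\ref{maincor2} from Corollary~\ref{maincor} by observing that $x^n$ permutes $\F_Q$ (since $\gcd(n,Q-1)=1$) and that $\bigl(x^r h(x)^m\bigr)^n = x^{rn}h(x)^{mn}$ induces the same map as $x^{rn}h(x)$ because $mn\equiv 1\pmod{Q-1}$. Your extra remark about checking the identity $h(\alpha)^{mn}=h(\alpha)$ at zeros of $h$ is a fine point of care that the paper leaves implicit, but it does not change the argument.
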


Note that in this corollary we can begin with an arbitrary permutation polynomial $H(x)$ over $\F_Q$; writing
$H(x)-H(0)=x h(x)$, it follows that $x h(x^{(q-1)/(Q-1)})$ permutes $\F_q$ whenever $q=Q^m$ with $m\equiv 1\pmod{Q-1}$.
Thus, this special case of our construction enables us to produce permutation polynomials over extensions of $\F_Q$ from
an arbitrary permutation polynomial over $\F_Q$.  These permutation polynomials have coefficients in $\F_Q$, 
which is a situation studied by Carlitz and Hayes~\cite{CH}; however, our approach is different than theirs, and we know no
precise connection between our results and theirs.

In the next section we prove Theorem~\ref{main} and the above corollaries, and also give further applications of
Theorem~\ref{main} in which $h(x)\notin\F_Q[x]$.  In these latter applications, we choose $h$, $r$ and $m$ so that
$g(x)$ will have degree at most $5$.  We restrict to these degrees solely for the purpose of having a short list of
applications; our method can be used with higher-degree $g(x)$ to produce arbitrarily many further families of
permutation polynomials.  In Section~3 we use these new classes of permutation polynomials to construct new families of
complete permutation polynomials, and in particular we answer the open problem of Wu and Lin \cite{WL} by constructing
complete permutation polynomials over $\F_{2^{2^e}}$.
In Section~4 we use our new permutation polynomials to construct complete sets of mutually orthogonal latin
squares.  Quite special cases of some of our applications of Theorem~\ref{main} 
were obtained via lengthy computations in two recent
papers by Tu, Zeng and Hu \cite{TZH} and Xu, Cao, Tu, Zeng and Hu \cite{XCTZH}.
In Section~5 we explain how the results of \cite{TZH} and \cite{XCTZH}
follow from our results.


\section{Permutation polynomials from permutations of subfields}

In this section we exhibit some families of permutation polynomials
over $\F_{Q^m}$ which can be obtained from permutation polynomials over $\F_Q$, and in particular we prove
Theorem~\ref{main}.  Our constructions rely on the following result.

\begin{lemma} \label{lem}
Pick $h\in\F_q[x]$ and integers $r,s>0$ such that $s\mid (q-1)$.  Then $f(x):=x^r h(x^{(q-1)/s})$
permutes\/ $\F_q$ if and only if
\begin{enumerate}
\item $\gcd(r,(q-1)/s)=1$ \,\text{ and}
\item $x^r h(x)^{(q-1)/s}$ permutes the set of $s$-th roots of unity in\/ $\F_q^*$.
\end{enumerate}
\end{lemma}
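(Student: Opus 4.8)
The plan is to reduce everything to the multiplicative group $\F_q^*$ and to exploit how $f$ interacts with the $d$-th power map, where $d:=(q-1)/s$. Since $r>0$ we have $f(0)=0$, so $f$ permutes $\F_q$ if and only if it permutes $\F_q^*$. Write $\mu_s$ and $\mu_d$ for the groups of $s$-th and $d$-th roots of unity in $\F_q^*$ (of orders $s$ and $d$, since $s,d\mid q-1$), and set $g(x):=x^r h(x)^d$, so that condition~(2) asks whether $g$ permutes $\mu_s$. The two facts I would record at the outset are the twisted homogeneity $f(\zeta x)=\zeta^r f(x)$ for all $\zeta\in\mu_d$, and the identity $f(x)^d=g(x^d)$, both immediate from $f(x)=x^r h(x^d)$; I would also note that the $d$-th power map $\F_q^*\to\mu_s$ is surjective and exactly $d$-to-one.

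Next I would dispose of the degenerate possibility that $h$ has a zero on $\mu_s$. If $d=1$ the lemma is trivial: condition~(1) is vacuous and $\mu_s=\F_q^*$, so condition~(2) is the assertion itself. If $d>1$ and $h(y_0)=0$ for some $y_0\in\mu_s$, pick $x_0$ with $x_0^d=y_0$; then $f$ vanishes on the whole $\mu_d$-coset of $x_0$, so $f$ is not injective, while $g(y_0)=0\notin\mu_s$, so $g$ does not permute $\mu_s$ --- both sides fail. Hence from now on I may assume $h$ is nonvanishing on $\mu_s$, equivalently $f$ is nonvanishing on $\F_q^*$; in particular $g(x^d)=f(x)^d\in\mu_s$ always, so $g$ genuinely maps $\mu_s$ into $\mu_s$.

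For the forward direction, assume $f$ permutes $\F_q^*$. If $e:=\gcd(r,d)>1$, choose $\zeta\in\F_q^*$ of order $e$; then $\zeta^r=\zeta^d=1$, so $f(\zeta x)=f(x)$ for every $x$ although $\zeta x\neq x$, contradicting injectivity; this proves~(1). For~(2), suppose $g(y_1)=g(y_2)$ with $y_i\in\mu_s$, and write $y_i=x_i^d$; then $f(x_1)^d=f(x_2)^d$, so $f(x_1)=u\,f(x_2)$ for some $u\in\mu_d$, and since $\zeta\mapsto\zeta^r$ permutes $\mu_d$ (because $\gcd(r,d)=1$) we may write $u=\zeta^r$, giving $f(\zeta x_2)=f(x_1)$; injectivity of $f$ forces $x_1=\zeta x_2$, hence $y_1=x_1^d=x_2^d=y_2$. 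So $g$ is injective on $\mu_s$, hence a permutation of it. For the converse, assume~(1) and~(2): if $f(a)=f(b)$ with $a,b\in\F_q^*$, then $g(a^d)=f(a)^d=f(b)^d=g(b^d)$, so $a^d=b^d$ by~(2); substituting back and cancelling the common nonzero factor $h(a^d)=h(b^d)$ yields $a^r=b^r$; then $a/b$ has order dividing both $d$ and $r$, and $\gcd(r,d)=1$, so $a=b$.

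The whole argument is bookkeeping, and I anticipate no real obstacle; the one point needing care is the interaction with zeros of $h$ on $\mu_s$, which is why I would settle the degenerate case first and thereafter work with an honest map $g\colon\mu_s\to\mu_s$. Equivalently, one can organize the proof conceptually: when $\gcd(r,d)=1$, the relation $f(\zeta x)=\zeta^r f(x)$ together with the fact that $\zeta\mapsto\zeta^r$ permutes $\mu_d$ shows that $f$ sends each $\mu_d$-coset of $\F_q^*$ onto a $\mu_d$-coset, hence descends to a self-map of $\F_q^*/\mu_d\cong\mu_s$ which, via $f(x)^d=g(x^d)$, is exactly $g$; so $f$ permutes $\F_q^*$ precisely when $g$ permutes $\mu_s$.
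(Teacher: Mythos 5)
Your proof is correct and complete; the paper itself does not reprove Lemma~\ref{lem} but cites \cite{TZ,Z1,Z2,Z4}, and your argument --- the commuting square $f(x)^{d}=g(x^{d})$ with $d=(q-1)/s$, the twisted homogeneity $f(\zeta x)=\zeta^{r}f(x)$ on $\mu_d$-cosets, and the separate treatment of zeros of $h$ on $\mu_s$ --- is essentially the standard proof given in those references. No gaps.
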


This lemma has been used in several investigations of permutation polynomials, for instance see
\cite{MZ,TZ,Z1,Z2,Z3,Z4}.  Its short proof is given in \cite{TZ,Z1,Z2,Z4}.

We now deduce Theorem~\ref{main} from Lemma~\ref{lem}.

\begin{proof}[Proof of Theorem~\ref{main}]
In light of Lemma~\ref{lem}, it suffices to show that $g(x)$ and $\tilde g(x):=x^r h(x)^{(q-1)/(Q-1)}$ induce the
same mapping on $\F_Q^*$.  For $\beta\in\F_Q^*$, the fact that $(q-1)/(Q-1)=1+Q+Q^2+\dots+Q^{m-1}$ implies that
\[
\tilde g(\beta) = \beta^r h(\beta)^{(q-1)/(Q-1)} = \beta^r \prod_{j=0}^{m-1} h(\beta)^{Q^j}
= \beta^r \prod_{j=0}^{m-1} h^{(Q^j)}(\beta) = g(\beta),
\]
which completes the proof.
\end{proof}

Corollary~\ref{maincor} follows at once from Theorem~\ref{main}.  To deduce Corollary~\ref{maincor2} from
Corollary~\ref{maincor}, note that $x^n$ permutes $\F_Q$ (since $n$ is coprime to $Q-1$), so $g(x):=x^r h(x)^m$ permutes $\F_Q$
if and only if $g(x)^n=x^{rn} h(x)^{mn}$ permutes $\F_Q$; since $mn\equiv 1\pmod{Q-1}$, this last condition says that
$x^{rn} h(x)$ permutes $\F_Q$.

Corollaries~\ref{maincor} and \ref{maincor2} apply Theorem~\ref{main} in the special case that $h(x)\in\F_Q[x]$.
In the rest of this section we demonstrate how to apply Theorem~\ref{main} when $h(x)\notin\F_Q[x]$.
For simplicity, we restrict to the case that $g(x)$ is a member of
some of the simplest classes of permutation polynomials over $\F_Q$:  specifically, we use
permutation polynomials over $\F_Q$ of degree at most $5$.  These low-degree permutation polynomials were
classified in Dickson's thesis \cite[\S 87]{Di}.  We now recall a simplified version of Dickson's result.

\begin{lemma}\label{dickson}
The following polynomials permute\/ $\F_Q$:
\begin{enumerate}
\item $x^3$ \quad if $Q\not\equiv 1\pmod{3}$ \\
\item $x^3-\beta x$ \quad if $Q=3^n$ and $\beta\in\F_Q^*$ is a nonsquare \\
\item $x^4+\beta x^2+\gamma x$ \quad if $Q=2^n$ and $\beta,\gamma\in\F_Q$ and $x^4+\beta x^2+\gamma x$ has no nonzero roots in $\F_Q$ \\
\item $x^5$ \quad if $Q\not\equiv 1\pmod{5}$ \\
\item $x^5+\beta x^3+5^{-1}\beta^2x$ \quad if $Q\equiv\pm 2\pmod{5}$ and $\beta\in\F_Q$ \\
\item $x^5-\beta x$ \quad if $Q=5^n$ and $\beta\in\F_Q$ is not a fourth power \\
\item $x^5+2\beta x^3+\beta^2x$ \quad if $Q=5^n$ and $\beta\in\F_Q$ is a nonsquare.
\end{enumerate}
Conversely, for every degree-$3$ permutation polynomial $f(x)$ over\/ $\F_Q$, there exist $\theta,\mu,\nu\in\F_Q$ with $\theta\ne 0$ such that
$\theta f(x+\mu)+\nu$ is on the above list.  The same is true for degree-$4$ permutation polynomials if $Q\ne 2,3,7$, and for
degree-$5$ permutation polynomials if $Q>7$ and $Q\ne 13$.
\end{lemma}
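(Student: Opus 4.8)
The plan is to prove the two halves of the lemma by quite different means: the claim that each of the seven listed polynomials permutes $\F_Q$ is a list of short, essentially computational verifications, whereas the converse is Dickson's classification, whose proof I would organize around counting points on an auxiliary curve.

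For the seven families, items (1) and (4) are the classical fact that $x^n$ permutes $\F_Q$ exactly when $\gcd(n,Q-1)=1$, which for $n=3$ and $n=5$ (both prime) is the stated congruence condition. Items (2), (3) and (6) are, in the indicated characteristic, additive maps: $x\mapsto x^3-\beta x$ is $\F_3$-linear when $\Char\F_Q=3$, $x\mapsto x^4+\beta x^2+\gamma x$ is $\F_2$-linear when $\Char\F_Q=2$, and $x\mapsto x^5-\beta x$ is $\F_5$-linear when $\Char\F_Q=5$; an $\F_p$-linear map of $\F_Q$ permutes $\F_Q$ iff its kernel is $\{0\}$, and here the kernel is the zero set in $\F_Q$ of the polynomial in question, which is $\{0\}$ precisely under the stated hypothesis (respectively: $\beta$ is a nonsquare, so $x^2=\beta$ is unsolvable; the polynomial has no nonzero root by assumption; $\beta$ is not a fourth power, so $x^4=\beta$ is unsolvable). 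Item (5) is the Dickson polynomial $D_5(x,a)$ with $a=-\beta/5$ (note $5$ is invertible here since $Q\equiv\pm2\pmod 5$), and for $\beta\neq 0$ a Dickson polynomial $D_5(x,a)$ with $a\neq 0$ permutes $\F_Q$ iff $\gcd(5,Q^2-1)=1$, i.e.\ iff $Q\not\equiv\pm1\pmod 5$, which under $5\nmid Q$ is exactly $Q\equiv\pm2\pmod 5$; the case $\beta=0$ reduces to item (4). For item (7) I would write $x^5+2\beta x^3+\beta^2 x=x(x^2+\beta)^2$ and argue directly: since $5^n\equiv1\pmod 4$ the element $-1$ is a square in $\F_Q$, so $-\beta$ is a nonsquare and $x^2+\beta$ has no root in $\F_Q$, whence $0$ is its own unique preimage; injectivity on $\F_Q^*$ can be obtained via the substitution $\eta=(x-\sqrt\beta)/(x+\sqrt\beta)$, which carries $\F_Q\cup\{\infty\}$ bijectively onto the norm-one subgroup of $\F_{Q^2}^*$ and, using $(1-\eta)^5=1-\eta^5$ in characteristic $5$, rewrites $f$ as an explicit rational function of $\eta$ that one checks is one-to-one on that subgroup.

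For the converse, let $f$ have degree $d\in\{3,4,5\}$. Using $f\mapsto\theta f(x+\mu)+\nu$ I would first normalize: scale so that $f$ is monic, translate to remove the constant term, and --- when $d$ is invertible in $\F_Q$ --- translate to remove the $x^{d-1}$ term. The remaining coefficients are then few, and whether a given normalized $f$ permutes $\F_Q$ can be read off the plane curve $C_f:\ \big(f(x)-f(y)\big)/(x-y)=0$, which has degree $d-1$: the polynomial $f$ permutes $\F_Q$ iff $C_f$ has no $\F_Q$-rational point off the diagonal $x=y$. If $C_f$ had an absolutely irreducible component defined over $\F_Q$, the Hasse--Weil bound would force it to carry on the order of $Q$ rational points once $Q$ is not too small, incompatibly with the permutation property; hence $C_f$ must be reducible in a constrained way, and running through the possible factorizations shows that $f$ coincides, after normalization, with one of the listed forms. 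The finitely many small $Q$ for which the Hasse--Weil estimate is inconclusive are precisely where sporadic normalized permutation polynomials occur that are not on the list, giving the exclusions $Q\in\{2,3,7\}$ in degree $4$ and $Q\le 7$ or $Q=13$ in degree $5$.

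The main obstacle is this last direction. The curve $C_f$ can degenerate --- for instance in characteristic $2$ one cannot remove the $x^3$ term in degree $4$, and in characteristic $5$ one cannot remove the $x^4$ or $x^2$ terms in degree $5$, so one must retain them and analyze the resulting one- or two-parameter families by hand --- and pinning down that the exceptional fields are exactly the stated ones requires treating each small $Q$ individually. This is the content of \S 87 of Dickson's thesis \cite{Di}, and in practice one may prefer simply to cite \cite{Di} (or a modern account) for the converse, after a brief remark reconciling Dickson's normal forms with the normalization $f\mapsto\theta f(x+\mu)+\nu$ used here.
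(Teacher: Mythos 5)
The paper does not actually prove this lemma: it is stated as ``a simplified version of Dickson's result'' and the classification is simply cited from \cite[\S 87]{Di}, with no proof environment following. Your write-up is therefore doing more than the paper does, and what you do is sound. The forward half (that the seven listed polynomials permute $\F_Q$) is correctly and essentially completely verified by elementary means: the $\gcd$ criterion for monomials, the kernel criterion for the additive polynomials in items (2), (3), (6), and the N\"obauer criterion $\gcd(5,Q^2-1)=1$ for the Dickson polynomial in item (5) (your identification $a=-\beta/5$, giving $5a^2=5^{-1}\beta^2$, is right). The one spot where you assert rather than prove is item (7): the reduction of $x(x^2+\beta)^2$ to a rational function of the norm-one parameter $\eta$ is the right idea, but ``one checks is one-to-one on that subgroup'' is where the actual computation lives, so that item is not yet self-contained. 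For the converse you outline the standard modern route via the curve $(f(x)-f(y))/(x-y)=0$ and Hasse--Weil, which is a legitimate alternative to Dickson's original coefficient-by-coefficient analysis, but as written it is only a program: the enumeration of admissible factorizations of that curve and the treatment of the exceptional fields $Q\in\{2,3,7\}$ and $Q\in\{2,3,4,5,7,13\}$ are exactly the hard part and are not carried out. Your closing suggestion --- cite \cite{Di} for the converse after reconciling normal forms --- is precisely what the paper does, so in practice your proposal and the paper agree on how this lemma should be handled.
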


We now use degree-$3$ permutation polynomials over $\F_Q$ to construct degree-$(Q+2)$ permutation polynomials over $\F_{Q^2}$:

\begin{cor}\label{cubic}
Pick $\alpha\in\F_{Q^2}^*$, and write $f_\alpha(x):=x^{Q+2}+\alpha x$.  Then $f_{\alpha}$ permutes\/ $\F_{Q^2}$ if and only if one of the following occurs:
\begin{enumerate}
\item $Q\equiv 5\pmod{6}$ and ${\alpha}^{Q-1}$ has order $6$;
   \item  $Q\equiv 2\pmod{6}$ 
     and ${\alpha}^{Q-1}$ has order $3$; or
\item $Q\equiv 0\pmod{3}$ and ${\alpha}^{Q-1}=-1$.
\end{enumerate}
In particular, the number of elements ${\alpha}\in\F_{Q^2}^*$ for which $f_{\alpha}$
permutes\/ $\F_{Q^2}$ is either $2(Q-1)$ or $Q-1$ or $0$, depending on
whether $Q$ is congruent to $2$, $0$ or $1$ modulo~$3$.
\end{cor}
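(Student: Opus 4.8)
The plan is to apply Theorem~\ref{main} with $q=Q^2$, $m=2$, and $r=1$, after writing $f_\alpha(x)=x\cdot h(x^{Q+1})$ with $h(x):=x+\alpha$ (here $(q-1)/(Q-1)=Q+1$). Condition (1) of Theorem~\ref{main} then reads $\gcd(1,Q+1)=1$, which is automatic, so $f_\alpha$ permutes $\F_{Q^2}$ if and only if
\[
g(x)=x\,h(x)\,h^{(Q)}(x)=x(x+\alpha)(x+\alpha^Q)=x^3+ax^2+bx
\]
permutes $\F_Q$, where $a:=\alpha+\alpha^Q$ and $b:=\alpha^{Q+1}$ lie in $\F_Q$. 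It will be convenient to set $\zeta:=\alpha^{Q-1}$, a $(Q+1)$-st root of unity, so that $a=\alpha(1+\zeta)$ and $b=\alpha^2\zeta$; in particular $\alpha\in\F_Q$ exactly when $\zeta=1$, and in that case $g(x)=x(x+\alpha)^2$ is visibly not a permutation, consistent with the claim.

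Next I would determine precisely when the monic cubic $g$ permutes $\F_Q$, invoking Dickson's classification (Lemma~\ref{dickson}) and separating three cases by $\Char\F_Q$. \emph{If $\Char\F_Q\notin\{2,3\}$}, completing the cube (substitute $x\mapsto x-a/3$) makes $g$ affine-equivalent to $x^3+(b-a^2/3)x$; by the converse part of Lemma~\ref{dickson} (the entry $x^3-\beta x$ being excluded since the characteristic is not $3$) a permuting cubic must be an affine image of $x^3$, which forces $Q\not\equiv1\pmod3$, and a quick coefficient comparison shows $x^3+cx$ is such an image only for $c=0$. Hence $g$ permutes iff $Q\equiv2\pmod3$ and $3b=a^2$; dividing by $\alpha^2$, the latter becomes $\zeta^2-\zeta+1=0$, i.e.\ $\zeta$ has order $6$, which itself forces $6\mid Q+1$ and so $Q\equiv5\pmod6$: this is case~(1). \emph{If $\Char\F_Q=2$}, the same reasoning applies, except that completing the cube is the substitution $x\mapsto x+a$ and yields affine-equivalence with $x^3+(a^2+b)x$; thus $g$ permutes iff $Q\not\equiv1\pmod3$ and $b=a^2$, and the latter is $\zeta^2+\zeta+1=0$, i.e.\ $\zeta$ has order $3$, forcing $3\mid Q+1$ and so $Q\equiv2\pmod6$: this is case~(2). \emph{If $\Char\F_Q=3$}, then $(x+\mu)^3=x^3+\mu^3$, so every affine image $\theta g(x+\mu)+\nu$ has $x^2$-coefficient $\theta a$; since the degree-$3$ entries of Lemma~\ref{dickson} have no $x^2$-term, $g$ can permute only if $a=0$, i.e.\ $\zeta=-1$. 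Conversely, if $a=0$ then $b=\alpha\alpha^Q=-\alpha^2$ and $g(x)=x^3-\alpha^2x$; here $\alpha\notin\F_Q$ (as $\zeta=-1\neq1$), whence $\alpha^2$ is a \emph{nonsquare} in $\F_Q$ (otherwise $\alpha=\pm\gamma\in\F_Q$), and Lemma~\ref{dickson}(2) gives that $g$ permutes: this is case~(3).

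To finish I would count the valid $\alpha$. The map $\F_{Q^2}^*\to\F_{Q^2}^*$ given by $\alpha\mapsto\alpha^{Q-1}$ is a homomorphism onto the cyclic group $\mu_{Q+1}$ of $(Q+1)$-st roots of unity with kernel of size $Q-1$, so each fiber over an element of $\mu_{Q+1}$ has exactly $Q-1$ elements. If $Q\equiv1\pmod3$, none of (1)--(3) can hold, giving $0$ valid $\alpha$. If $Q\equiv0\pmod3$, only (3) is possible and $-1\in\mu_{Q+1}$ since $Q$ is odd, giving $Q-1$ valid $\alpha$. If $Q\equiv2\pmod3$, then $3\mid Q+1$ and the governing condition asks $\zeta$ to have order $6$ (when $Q$ is odd) or order $3$ (when $Q$ is even); since $\mu_{Q+1}$ contains $\phi(6)=\phi(3)=2$ elements of that order, there are $2(Q-1)$ valid $\alpha$.

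I expect the main obstacle to be accurate bookkeeping rather than any deep difficulty: one must apply the \emph{converse} half of Lemma~\ref{dickson} correctly in each characteristic to rule out spurious permuting cubics, and must not overlook the small observation in characteristic $3$ that $\alpha^{Q-1}=-1$ already forces $\alpha^2$ to be a nonsquare in $\F_Q$. Tracking which congruence class of $Q$ is implied by a given order of $\zeta$ — hence which of the three mutually exclusive cases is in force — is the only other point that needs care.
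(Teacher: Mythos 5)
Your proposal is correct and follows essentially the same route as the paper: reduce via Theorem~\ref{main} to deciding when the cubic $g_\alpha(x)=x(x+\alpha)(x+\alpha^Q)$ permutes $\F_Q$, then invoke Dickson's classification of low-degree permutation polynomials, treating the characteristic-$3$ case exactly as the paper does. The only real difference is in the case $Q\equiv 2\pmod{3}$, and it is cosmetic: the paper identifies $\alpha$ as $\mu(1-\omega)$ by matching the linear factors of $(x+\mu)^3-\mu^3$ via unique factorization and then computes $(1-\omega)^{Q-1}=-\omega^2$, whereas you complete the cube and extract the equivalent cyclotomic condition $\zeta^2\mp\zeta+1=0$ on $\zeta=\alpha^{Q-1}$ by comparing coefficients; both arguments are valid and yield the same order-$6$/order-$3$ dichotomy and the same count.
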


This result is from \cite{TZ}.  Since that paper is unpublished, we include the proof.

\begin{proof}
By Theorem~\ref{main}, $f_{\alpha}$ permutes $\F_{Q^2}$ if and only if the polynomial $g_{\alpha}(x):=x(x+\alpha)(x+\alpha^Q)$
permutes $\F_Q$.  By Lemma~\ref{dickson}, the latter condition never occurs if $Q\equiv 1\pmod{3}$.

Now suppose that $Q\equiv 2\pmod{3}$, so that $g_{\alpha}$ permutes $\F_Q$ if and only if 
$g_{\alpha}(x)=(x+\mu)^3-\mu^3$ for some $\mu\in\F_Q$.  Note that $\mu\ne 0$ (since $\alpha\ne 0$).
We have $(x+\mu)^3-\mu^3=x(x+\mu-\omega\mu)(x+\mu-\omega^2\mu)$ where $\omega$ is a primitive cube root
of unity.  By unique factorization in $\F_{Q^2}[x]$, and the fact that $\omega\notin\F_Q$,
it follows that $f_{\alpha}$ permutes $\F_{Q^2}$ if and only if
$\alpha=\mu(1-\omega)$ for some primitive cube root of unity $\omega$ and some $\mu\in\F_Q^*$.
Equivalently, $\alpha^{Q-1}=(1-\omega)^{Q-1}$, which we compute to be
\[
(1-\omega)^{Q-1}=\frac{(1-\omega)^Q}{1-\omega}=\frac{1-\omega^Q}{1-\omega}  
   = \frac{1-\omega^2}{1-\omega}=1+\omega=-\omega^2.
\]
It follows that $(1-\omega)^{Q-1}$ has order six if $Q$ odd, and order three
if $Q$ even, and conversely each element of these orders occurs as $(1-\omega)^{Q-1}$ for some choice of $\omega$.
This concludes the proof when $Q\equiv 2\pmod{3}$.

Now suppose that $Q\equiv 0\pmod{3}$.  By Lemma~\ref{dickson}, the only cubic permutation polynomials over $\F_Q$
which are monic and divisible by $x$ are the polynomials $x^3-\beta x$ where $\beta\in\F_Q$ is either $0$ or a nonsquare.
For $\beta\ne 0$, any such polynomial is the product of $x$ and an irreducible quadratic in $\F_Q[x]$, so it equals
$x(x+\alpha)(x+\alpha^Q)$ if and only if $\alpha^2=\beta$.  It follows that, for $\alpha\in\F_{Q^2}^*$, the polynomial
$g_{\alpha}$ permutes $\F_Q$ if and only if $\alpha^2$ is a nonsquare in~$\F_Q$, or equivalently
$\alpha^{Q-1}=-1$.
\end{proof}

Likewise, using degree-$4$ permutation polynomials over $\F_r$ yields the following result from \cite{TZ}.

\begin{cor}
\label{quartic}
Let $Q$ be a prime power.  For $\alpha\in\F_{Q^3}^*$, the polynomial $x^{Q^2+Q+2}+{\alpha}x$
permutes\/ $\F_{Q^3}$ if and only if one of the following occurs:
\begin{enumerate}
\item $Q$ is even and $\alpha^{Q^2}+\alpha^Q+\alpha=0$;
\item $Q=7$ and $\alpha^{24}+\alpha^{18}+4\alpha^{12}+2=0$;
\item $Q=3$ and $\alpha^{12}+\alpha^{10}+\alpha^4+1=0$; or
\item $Q=2$ and $\alpha\ne 1$.
\end{enumerate}
The number of elements $\alpha\in\F_{Q^3}^*$ having the stated properties is $Q^2-1$, $24$, $12$ and $6$ in cases $(1)$--$(4)$.
For $\alpha\in\F_{Q^2}^*$, the polynomial $x^{Q+3}+\alpha x^2$ permutes\/ $\F_{Q^2}$ if and only if $Q=2$ and $\alpha\ne 1$.
\end{cor}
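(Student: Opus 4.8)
The plan is to apply Theorem~\ref{main} (or rather Corollary~\ref{maincor}-style reasoning, but in the non-$\F_Q$-coefficient setting) exactly as in the proof of Corollary~\ref{cubic}, reducing each assertion to a statement about a low-degree permutation polynomial over a small field. For the first family, set $q=Q^3$, so $(q-1)/(Q-1)=Q^2+Q+1$ and $r=1$; then $x^{Q^2+Q+2}+\alpha x = x\cdot h(x^{(q-1)/(Q-1)})$ with $h(x)=x+\alpha$. Since $\gcd(1,Q^2+Q+1)=1$, condition (1) of Theorem~\ref{main} is automatic, so $x^{Q^2+Q+2}+\alpha x$ permutes $\F_{Q^3}$ if and only if
\[
g_\alpha(x):=x(x+\alpha)(x+\alpha^Q)(x+\alpha^{Q^2})
\]
permutes $\F_Q$. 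This is a monic degree-$4$ permutation polynomial over $\F_Q$ divisible by $x$, whose three nonzero roots are $-\alpha,-\alpha^Q,-\alpha^{Q^2}$ (a Frobenius orbit over $\F_Q$).

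First I would dispose of the generic case using the classification in Lemma~\ref{dickson}: for $Q\notin\{2,3,7\}$ every degree-$4$ permutation polynomial over $\F_Q$ is $\theta f(x+\mu)+\nu$ for some $f$ on Dickson's list with $\theta\ne0$. Matching $g_\alpha$ (which is monic, has $g_\alpha(0)=0$, and — after translating — must stay of the form on the list) forces $Q$ to be even and $g_\alpha(x)=x^4+\beta x^2+\gamma x$ for suitable $\beta,\gamma\in\F_Q$ with no nonzero roots in $\F_Q$; comparing the $x^3$-coefficient (which is $\alpha+\alpha^Q+\alpha^{Q^2}$ in $g_\alpha$ and $0$ in $x^4+\beta x^2+\gamma x$) gives the condition $\alpha^{Q^2}+\alpha^Q+\alpha=0$, i.e. the trace of $\alpha$ from $\F_{Q^3}$ to $\F_Q$ vanishes. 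Conversely, when $Q$ is even and this trace vanishes one checks $g_\alpha$ is of permutation type and, crucially, that $\alpha\ne0$ automatically holds among the trace-zero elements giving genuine degree-$4$ polynomials; the count $Q^2-1$ is just the number of nonzero trace-zero elements of $\F_{Q^3}$. Then I would handle $Q=7$ and $Q=3$ by hand: in each case I enumerate (or argue via Dickson's list together with the exceptional degree-$4$ permutation polynomials over $\F_7$ and $\F_3$) exactly which Frobenius orbits $\{-\alpha,-\alpha^Q,-\alpha^{Q^2}\}$ produce a permutation polynomial, and translate the answer into the stated polynomial condition on $\alpha$; the displayed polynomials $\alpha^{24}+\alpha^{18}+4\alpha^{12}+2$ over $\F_7$ and $\alpha^{12}+\alpha^{10}+\alpha^4+1$ over $\F_3$ have degrees $24$ and $12$, matching the claimed counts, which is a useful consistency check. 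The case $Q=2$ is trivial: $g_\alpha(x)=x(x+\alpha)(x+\alpha^2)(x+\alpha^4)$ over $\F_2$ must permute the two-element field, which happens for exactly the $6$ elements $\alpha\in\F_8^*\setminus\{1\}$.

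For the last sentence, about $x^{Q+3}+\alpha x^2$ over $\F_{Q^2}$, I would take $q=Q^2$, $r=2$, $(q-1)/(Q-1)=Q+1$, and $h(x)=x+\alpha$, so $x^{Q+3}+\alpha x^2=x^2 h(x^{Q+1})$; Theorem~\ref{main} then says this permutes $\F_{Q^2}$ iff $\gcd(2,Q+1)=1$ and $g_\alpha(x):=x^2(x+\alpha)(x+\alpha^Q)$ permutes $\F_Q$. The gcd condition already forces $Q$ even. But $g_\alpha$ has a double root at $0$, so as a function on $\F_Q$ it sends $0\mapsto0$ and is injective only if $x\mapsto x^2(x+\alpha)(x+\alpha^Q)$ has no other zero and no collisions — for $Q\ge4$ a short argument (e.g. a degree or a Hermite-criterion count, or again Dickson's list, noting a genuine permutation polynomial of degree $Q+2<Q$-many... ) rules this out, leaving only $Q=2$, where $g_\alpha(x)=x^2(x+\alpha)(x+\alpha^2)$ over $\F_2$ permutes $\F_2$ exactly when $\alpha\in\F_4^*\setminus\{1\}$, i.e. $\alpha\ne1$.

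The main obstacle is the two sporadic cases $Q=7$ and $Q=3$: Dickson's classification of degree-$4$ permutation polynomials has genuine exceptions there, so one cannot simply read off the answer from Lemma~\ref{dickson} and must instead identify, among the finitely many Frobenius orbits in $\F_{343}$ and $\F_{27}$ respectively, precisely which ones yield a permutation polynomial, and then recognize the resulting set of admissible $\alpha$ as the zero set of the stated polynomial. This is a finite check, best done by a short computation, and the degrees $24$ and $12$ of the given polynomials are exactly the predicted sizes of those solution sets.
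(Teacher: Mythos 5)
Your reduction and overall strategy coincide with the paper's: apply Theorem~\ref{main} with $r=1$, $h(x)=x+\alpha$ (resp.\ $r=2$) to reduce to whether $g_\alpha(x)=x(x+\alpha)(x+\alpha^Q)(x+\alpha^{Q^2})$ (resp.\ $x^2(x+\alpha)(x+\alpha^Q)$) permutes $\F_Q$, then invoke Dickson's classification for $Q\notin\{2,3,7\}$ and do finite checks for the sporadic fields, exactly as the paper does. Two places are left genuinely unfinished, though both are one-line repairs. First, in the converse direction of case (1) you must still verify the ``no nonzero roots in $\F_Q$'' hypothesis of Dickson's item (3); the needed observation is that a nonzero root of $g_\alpha$ in $\F_Q$ would have to be $-\alpha^{Q^i}$ for some $i$, forcing $\alpha\in\F_Q$ and hence $\alpha+\alpha^Q+\alpha^{Q^2}=3\alpha=\alpha\ne 0$ in characteristic $2$, contradicting the trace condition --- so trace zero plus $\alpha\ne 0$ already rules out nonzero roots. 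Second, your argument that $x^{Q+3}+\alpha x^2$ fails for even $Q>2$ trails off and misstates the relevant degree (the polynomial being tested against Dickson's list is $g_\alpha$, of degree $4$, not something of degree $Q+2$); the correct finish, as in the paper, is that in characteristic $2$ translation does not change the $x^3$-coefficient, so Dickson's list forces $\alpha+\alpha^Q=0$, i.e.\ $\alpha\in\F_Q$, but then $g_\alpha=x^2(x+\alpha)^2$ vanishes at the nonzero point $\alpha$ and cannot permute $\F_Q$. With these two verifications supplied, your proof is the paper's proof.
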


\begin{proof}
First assume $\alpha\in\F_{Q^3}^*$.
By Theorem~\ref{main}, $x^{Q^2+Q+2}+{\alpha}x$ permutes $\F_{Q^3}$ if and only if $g_{\alpha}(x):=x(x+\alpha)(x+\alpha^Q)(x+\alpha^{Q^2})$
permutes $\F_Q$.  By Lemma~\ref{dickson}, the latter condition never occurs if $Q$ is odd, except possibly when $Q\in\{3,7\}$.
Since it is easy to verify the result directly for $Q\le 7$, we assume henceforth that $Q$ is even and $Q>2$.  In this case,
$g_{\alpha}$ permutes $\F_Q$ if and only if $\alpha+\alpha^Q+\alpha^{Q^2}=0$ and $g_{\alpha}$ has no nonzero roots in
$\F_Q$, and one easily checks that the latter condition follows from the former since $\alpha\ne 0$.

Now assume $\alpha\in\F_{Q^2}^*$.  As above, $x^{Q+3}+\alpha x^2$ permutes $\F_{Q^2}$ if and only if
$\gcd(2,Q+1)=1$ and $g_{\alpha}(x):=x^2(x+\alpha)(x+\alpha^Q)$ permutes $\F_Q$.
The first condition says that $Q$ is even, so if $Q>2$ then Lemma~\ref{dickson} implies that $\alpha$ is not in $\F_Q$
and $g_{\alpha}$ has no degree-three term, which cannot both occur.  Finally, the result is clear when $Q=2$.
\end{proof}

Finally, using degree-$5$ permutation polynomials over $\F_Q$ yields the following result.

\begin{cor}
\label{quintic}
Pick any prime power $Q$, and any ${\alpha}\in\F_{Q^2}^*$.  The polynomial $x^{2Q+3}+{\alpha}x$ permutes\/ $\F_{Q^2}$
if and only if one of the following holds:
\begin{enumerate}
\item $Q\equiv\pm 2\pmod{5}$ and $\alpha^{2Q-2}-3\alpha^{Q-1}+1=0$;
\item $Q=5^n$ and either ${\alpha}^{Q-1}=-1$ or ${\alpha}^{(Q-1)/2}=-1$;
\item $Q=13$ and $\alpha^{12}-3\alpha^6+1=0$;
\item $Q=5$ and ${\alpha}^4 - {\alpha}^2+1=0$; or
\item $Q=3$ and either $\alpha=1$ or ${\alpha}^2=-1$.
\end{enumerate}
The number of elements $\alpha\in\F_{Q^2}$ having the stated properties is $2Q-2$ in case $(1)$, $3(Q-1)/2$ in case $(2)$,
and $12$, $4$ and $3$ in cases $(3)$, $(4)$ and $(5)$.
\end{cor}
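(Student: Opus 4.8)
The plan is to descend to $\F_Q$ via Theorem~\ref{main} and then apply the classification of low‑degree permutation polynomials (Lemma~\ref{dickson}). Set $q=Q^2$, $r=1$, and $h(y):=y^2+\alpha$, so that $x^{2Q+3}+\alpha x=x\,h(x^{Q+1})=x\,h(x^{(q-1)/(Q-1)})$ with $\gcd(1,Q+1)=1$; then by Theorem~\ref{main}, $x^{2Q+3}+\alpha x$ permutes $\F_{Q^2}$ if and only if
\[
g_\alpha(x):=x\,h(x)\,h^{(Q)}(x)=x(x^2+\alpha)(x^2+\alpha^Q)=x^5+(\alpha+\alpha^Q)\,x^3+\alpha^{Q+1}\,x
\]
permutes $\F_Q$. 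Here $\alpha+\alpha^Q$ and $\alpha^{Q+1}$ lie in $\F_Q$ (they are the trace and norm of $\alpha$ for $\F_{Q^2}/\F_Q$) and $\alpha^{Q+1}\ne0$, so everything reduces to deciding when $x^5+\beta x^3+\gamma x$ permutes $\F_Q$, where $\beta:=\alpha+\alpha^Q$ and $\gamma:=\alpha^{Q+1}\ne0$.

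For $Q\in\{2,3,4,5,7,13\}$ --- the fields for which Lemma~\ref{dickson} does not claim a complete classification of degree‑$5$ permutation polynomials --- I would argue directly: since $x^Q=x$ on $\F_Q$, reducing $g_\alpha$ modulo $x^Q-x$ gives a polynomial function of small degree, one reads off by inspection which pairs $(\alpha+\alpha^Q,\alpha^{Q+1})$ produce a permutation, and one translates the resulting conditions on trace and norm into conditions on $\alpha^{Q-1}$ (and, in characteristic $5$, on $\alpha^{(Q-1)/2}$). This yields the formula in~(1) for $Q\in\{2,7\}$, the conditions in~(3),(4),(5) for $Q=13,5,3$, and no solutions for $Q=4$; for $Q\in\{3,5,13\}$ the full admissible set of $\alpha$ is the union of the exceptional set with whichever of~(1),(2) also applies, which is why the statement reads ``if and only if one of the following holds'' and the counts $12,4,3$ refer only to the exceptional part.

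Now assume $Q>7$ and $Q\ne13$, so Lemma~\ref{dickson} is a complete classification. Since $g_\alpha$ is monic of degree $5$, it permutes $\F_Q$ if and only if $\theta g_\alpha(x+\mu)+\nu$ equals one of the degree‑$5$ entries (items (4)--(7)) for some $\theta\in\F_Q^*$, $\mu,\nu\in\F_Q$. Comparing leading coefficients gives $\theta=1$; the coefficient of $x^4$ gives $\mu=0$ unless $\Char\F_Q=5$, and when $\mu=0$ the constant term gives $\nu=0$, so $g_\alpha$ is on the list; and if $\Char\F_Q=5$ and $\mu\ne0$, the coefficient of $x^2$ gives $3(\alpha+\alpha^Q)\mu=0$, whence $\alpha+\alpha^Q=0$ and $g_\alpha=x^5-(-\alpha^{Q+1})x$ has the shape of item (6). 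Hence:
\begin{enumerate}
\item If $\Char\F_Q\ne5$, items (6),(7) require characteristic $5$ and do not occur, and item (4) requires $\alpha^{Q+1}=0$, which is excluded; so $g_\alpha$ permutes $\F_Q$ iff $Q\equiv\pm2\pmod 5$ and $g_\alpha$ has the shape of item (5), i.e.\ $5\,\alpha^{Q+1}=(\alpha+\alpha^Q)^2$. Dividing by $\alpha^{Q+1}$ turns this into $\alpha^{Q-1}+\alpha^{1-Q}=3$, i.e.\ $\alpha^{2Q-2}-3\alpha^{Q-1}+1=0$: this is case~(1), and there are no solutions when $Q\equiv\pm1\pmod 5$.
\item If $\Char\F_Q=5$ (so $Q=5^n$, $n\ge2$): when $\alpha+\alpha^Q\ne0$ then $\mu=0$, so $g_\alpha$ is on the list; item (4) is excluded, item (5) needs $Q\equiv\pm2\pmod 5$ (false), item (6) has no cubic term (incompatible with $\alpha+\alpha^Q\ne0$), leaving item (7), $g_\alpha=x(x^2+\beta)^2$ with $\beta$ a nonsquare, which on comparing coefficients forces $\alpha\in\F_Q^*$ and $\alpha$ a nonsquare, i.e.\ $\alpha^{(Q-1)/2}=-1$. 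When instead $\alpha+\alpha^Q=0$, i.e.\ $\alpha^{Q-1}=-1$, then $\alpha^{Q+1}=-\alpha^2$ with $\alpha^2\in\F_Q^*$, and by item (6) $g_\alpha=x^5-\alpha^2x$ permutes $\F_Q$ provided $\alpha^2$ is not a fourth power in $\F_Q$ --- which always holds, since writing $\F_{Q^2}^*=\langle\zeta\rangle$, the relation $\alpha^{Q-1}=-1$ forces the exponent of $\alpha^2$ relative to the generator $\zeta^{Q+1}$ of $\F_Q^*$ to be odd, while $4\mid Q-1$. Thus $g_\alpha$ permutes $\F_Q$ iff $\alpha^{Q-1}=-1$ or $\alpha^{(Q-1)/2}=-1$: this is case~(2).
\end{enumerate}

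For the element counts: in case~(1), $\alpha^{2Q-2}-3\alpha^{Q-1}+1=0$ says $\alpha^{Q-1}$ is a root of $X^2-3X+1$, which is irreducible over $\F_Q$ when $Q\equiv\pm2\pmod 5$ (its discriminant $5$ is then a nonsquare in $\F_Q$ in odd characteristic, and in characteristic $2$ it is $X^2+X+1$, irreducible since $3\nmid Q-1$); so its two roots are Galois conjugate over $\F_Q$ with product $1$ and therefore lie in the norm‑one subgroup $\{u:u^{Q+1}=1\}$, which is exactly the image of $\alpha\mapsto\alpha^{Q-1}$ on $\F_{Q^2}^*$; each root then has $Q-1$ preimages, for a total of $2Q-2$. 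In case~(2), $\{\alpha:\alpha^{Q-1}=-1\}$ is a coset of $\ker(\alpha\mapsto\alpha^{Q-1})=\F_Q^*$, hence has $Q-1$ elements, while $\{\alpha:\alpha^{(Q-1)/2}=-1\}$ lies in $\F_Q^*$ and is its set of $(Q-1)/2$ nonsquares, and these are disjoint, for a total of $3(Q-1)/2$. The step I expect to be the main obstacle is the characteristic‑$5$ analysis in case~(2): one must carefully allow the nontrivial translation $\mu\ne0$ when $\alpha+\alpha^Q=0$, then verify that the ensuing fourth‑power condition is vacuous via the discrete‑logarithm count in $\F_{Q^2}^*$, and extract the clean condition $\alpha^{(Q-1)/2}=-1$ from the match with item~(7); a lesser nuisance is the six small fields, where Dickson's converse fails and the admissible $\alpha$ form an overlapping union of conditions that must be sorted out by hand.
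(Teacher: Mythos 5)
Your proposal is correct and follows essentially the same route as the paper: reduce via Theorem~\ref{main} to the quintic $g_\alpha(x)=x(x^2+\alpha)(x^2+\alpha^Q)$ over $\F_Q$, dispose of the small fields by direct verification, and invoke Dickson's classification, splitting on whether the characteristic is $5$ and using the irreducibility of $x^2-3x+1$ (discriminant $5$) for the count $2Q-2$. The only cosmetic differences are that you track the translation $\mu$ by explicit coefficient comparison and rule out the fourth-power condition by a discrete-logarithm count, where the paper simply notes that $\alpha^2$ is a nonsquare in $\F_Q$ because $\alpha\notin\F_Q$.
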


\begin{proof}
By Theorem~\ref{main}, $x^{2Q+3}+\alpha x$ permutes $\F_{Q^2}$ if and only if $g_{\alpha}(x):=x(x^2+\alpha)(x^2+\alpha^Q)$
permutes $\F_Q$.  The result is easy to verify if $Q\le 13$, so we assume throughout that $Q>13$.

First suppose $Q\not\equiv 0\pmod{5}$.  By Lemma~\ref{dickson}, $g_{\alpha}$ permutes $\F_Q$ if and only if
$Q\equiv \pm 2\pmod{5}$ and $g_{\alpha}(x)=x^5+\beta x^3+5^{-1}\beta^2x$ for some $\beta\in\F_Q^*$, or equivalently
$(x+\alpha)(x+\alpha^Q)=x^2+\beta x+5^{-1}\beta^2$.  If this last equality holds then
\[
\alpha^{Q-1} + \alpha^{1-Q} = \frac{(\alpha^Q+\alpha)^2}{\alpha^{Q+1}} - 2 = \frac{\beta^2}{5^{-1}\beta^2} - 2 = 5 - 2 = 3,
\]
so that $\alpha^{2Q-2}-3\alpha^{Q-1}+1=0$.  Conversely, if $\alpha\in\F_{Q^2}^*$ satisfies
$\alpha^{2Q-2}-3\alpha^{Q-1}+1=0$, then $(\alpha^Q+\alpha)^2=5\alpha^{Q+1}$, so $\beta:=\alpha^Q+\alpha$
satisfies $(x+\alpha)(x+\alpha^Q)=x^2+\beta x + 5^{-1}\beta^2$.
Here $\beta\in\F_Q$, and further $\beta\ne 0$ since otherwise we would have
$\alpha^{Q-1}=1$ so that $\alpha^{2Q-2}-3\alpha^{Q-1}+1=-1$ is nonzero.
In case $Q\not\equiv 0\pmod{5}$, it remains only to show that the polynomial $x^{2Q-2}-3x^{Q-1}+1$ has
$2Q-2$ roots in $\F_{Q^2}^*$.  It suffices to show that $x^2-3x+1$ is irreducible over $\F_Q$, since then the roots of
this polynomial in $\F_{Q^2}$ are $\gamma$ and $\gamma^Q$; since the product of the roots is $1$, it follows that
$\gamma^{Q+1}=1$, so that $\gamma$ has $(Q-1)$ distinct $(Q-1)$-th roots in $\F_{Q^2}^*$.  Thus, we need only
show that $x^2-3x+1$ is irreducible over $\F_Q$.
To this end, write $Q=p^j$ with $p$ prime; since $Q\equiv\pm 2\pmod{5}$, we see that
$j$ is odd and $p\equiv\pm 2\pmod{5}$.  Since $j$ is odd, irreducibility of $x^2-3x+1$ over $\F_Q$ is equivalent to
irreducibility over $\F_p$.  The latter irreducibility is clear if $p=2$, so assume $p$ is odd.  Then $x^2-3x+1$ is irreducible over
$\F_p$ if and only if its discriminant (namely $5$) is a nonsquare in $\F_p$, which by quadratic reciprocity is the same as requiring that $p$ is a nonsquare in $\F_5$, which indeed is the case since $p\equiv\pm 2\pmod{5}$.

Now suppose $Q\equiv 0\pmod{5}$.  Note that if $g_{\alpha}(x)$ has a term of degree $3$, then $g_{\alpha}(x+\mu)$
has a term of degree $2$ for any $\mu\in\F_Q^*$.  Thus, Lemma~\ref{dickson} implies that $g_{\alpha}$ permutes $\F_Q$
if and only if either
\begin{enumerate}
\item $\alpha+\alpha^Q=0$ and $-\alpha^{Q+1}$ is not a fourth power in $\F_Q$, or
\item $(x+\alpha)(x+\alpha^Q)=x^2+2\beta x+\beta^2$ for some nonsquare $\beta\in\F_Q$.
\end{enumerate}
In the first case, since $\alpha^{Q-1}=-1$, we have $\alpha\notin\F_Q$ and $-\alpha^{Q+1}=\alpha^2$, so
$-\alpha^{Q+1}$ is a nonsquare in $\F_Q$ and hence is not a fourth power.  In the second case,
the equality $(x+\alpha)(x+\alpha^Q)=(x+\beta)^2$ occurs just when $\alpha=\beta$.
\end{proof}


\section{Complete permutation polynomials}

In this section we use the results of the previous section to construct complete permutation polynomials.

\begin{cor}\label{lots}
Pick $\alpha\in\F_Q^*$, and put $q=Q^m$ where $m$ and $s$ are positive integers with $\gcd(m,Q-1)=1$.
Then $f(x):=\alpha x^{1+s(q-1)/(Q-1)}$ is a complete permutation polynomial over\/ $\F_q$ if and only if
\begin{enumerate}
\item $\gcd(1+s(q-1)/(Q-1),Q-1)=1$ \,\text{ and}
\item $\alpha x^{ms+1}+x$ permutes\/ $\F_Q$.
\end{enumerate}
\end{cor}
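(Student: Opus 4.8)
The plan is to split the definition of a complete permutation polynomial into its two halves: that $f(x)$ permutes $\F_q$, and that $f(x)+x$ permutes $\F_q$. I will show the first half is equivalent to condition~(1), and the second half is equivalent to condition~(2); together these yield the stated ``if and only if''. Throughout I would abbreviate $d:=(q-1)/(Q-1)$, so that $q-1=d(Q-1)$ and $f(x)=\alpha x^{N}$ with $N:=1+sd$.

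For the first half: since $\alpha\ne 0$, the monomial $\alpha x^{N}$ permutes $\F_q$ if and only if $\gcd(N,q-1)=1$. Here $N\equiv 1\pmod d$, so $\gcd(N,d)=1$, whence $\gcd(N,q-1)=\gcd(N,d(Q-1))=\gcd(N,Q-1)$; this is precisely condition~(1). This step is a routine gcd manipulation and needs nothing beyond the standard criterion for a monomial to permute a finite field.

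For the second half, the key observation is that $f(x)+x=\alpha x^{1+sd}+x=x\cdot h\bigl(x^{(q-1)/(Q-1)}\bigr)$, where $h(y):=\alpha y^{s}+1$ lies in $\F_Q[y]$ because $\alpha\in\F_Q$. Since $\gcd(m,Q-1)=1$ I may fix a positive integer $n$ with $mn\equiv 1\pmod{Q-1}$, and then apply Corollary~\ref{maincor2} with $r=1$: its first condition $\gcd(1,d)=1$ is automatic, so $f(x)+x$ permutes $\F_q$ if and only if $x^{n}h(x)=\alpha x^{n+s}+x^{n}$ permutes $\F_Q$. It then remains only to match this with condition~(2), which I would do by substituting $x^m$ for $x$: as $\gcd(m,Q-1)=1$, the map $x\mapsto x^m$ permutes $\F_Q$, so $\alpha x^{n+s}+x^{n}$ permutes $\F_Q$ if and only if $\alpha x^{m(n+s)}+x^{mn}$ does; and since $mn\equiv 1\pmod{Q-1}$ forces $\gamma^{mn}=\gamma$ for all $\gamma\in\F_Q$, this last polynomial induces the same function on $\F_Q$ as $\alpha x^{ms+1}+x$ (both vanish at $0$, and at nonzero $\gamma$ one has $\gamma^{m(n+s)}=\gamma^{mn}\gamma^{ms}=\gamma^{ms+1}$). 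Hence $f(x)+x$ permutes $\F_q$ exactly when condition~(2) holds, and combining the two halves completes the proof.

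The main thing to be careful about is the last step: the exponent bookkeeping modulo $Q-1$, and the distinction between a polynomial and the function it induces on $\F_Q$. The substitution $x\mapsto x^m$ is the device that converts the output of Corollary~\ref{maincor2} into the clean form $\alpha x^{ms+1}+x$, and it is precisely there that the hypothesis $\gcd(m,Q-1)=1$ does real work, beyond merely guaranteeing that $n$ exists and that Corollary~\ref{maincor2} applies with these parameters. Everything else is routine.
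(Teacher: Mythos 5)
Your proof is correct and follows essentially the same route as the paper's: the permutation of $f$ itself is handled by the monomial criterion together with the reduction $\gcd(N,q-1)=\gcd(N,Q-1)$, and the permutation of $f(x)+x$ is handled by Corollary~\ref{maincor2} with $r=1$ and $h(y)=\alpha y^s+1$, followed by the substitution $x\mapsto x^m$ and reduction of exponents modulo $Q-1$. The exponent bookkeeping and the role of $\gcd(m,Q-1)=1$ are exactly as in the paper, so there is nothing to add.
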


\begin{proof}
By Corollary~\ref{maincor2} with $r=1$ and $h(x)=\alpha x^s+1$, we see that $f(x)+x$ permutes $\F_q$ if and only if $g(x):=x^n (\alpha x^s+1)$ 
permutes $\F_Q$, where $mn\equiv 1\pmod{Q-1}$.
Since $\gcd(m,Q-1)=1$, the polynomial $x^m$ permutes $\F_Q$, so $g(x)$ permutes $\F_Q$ if and only if
$g(x^m)=x^{mn} (\alpha x^{ms}+1)$ permutes $\F_Q$.  Since $mn\equiv 1\pmod{Q-1}$, this last condition says that
$x(\alpha x^{ms}+1)$ permutes $\F_Q$.
  Next, $f(x)$ permutes $\F_q$ if and only if $1+s(q-1)/(Q-1)$ is coprime to $q-1$; since this number is clearly coprime to
$(q-1)/(Q-1)$, it suffices to ensure that it is coprime to $Q-1$.
\end{proof}

One can use Corollary~\ref{lots} to exhibit many families of complete permutation polynomials, by using the various known families
of permutation binomials over $\F_Q$.  We give only one instance of this, which suffices to answer the open problem in \cite{WL}.

\begin{cor} If $Q$ is a power of $4$, and $\alpha\in\F_Q$ is not a cube, then $\alpha x^{(Q+1)Q/2}$ is a complete permutation
polynomial over\/ $\F_{Q^2}$.
\end{cor}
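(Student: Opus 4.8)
The plan is to derive the statement from Corollary~\ref{lots}, with base field $\F_Q$ and extension degree $m=2$, so that $q=Q^2$. Since $Q$ is a power of $4$ it is even, hence $Q-1$ is odd and $\gcd(m,Q-1)=\gcd(2,Q-1)=1$, so Corollary~\ref{lots} applies. With $m=2$ we have $(q-1)/(Q-1)=Q+1$, so the corollary yields complete permutation polynomials of $\F_{Q^2}$ of the form $\alpha x^{1+s(Q+1)}$, and the task is to pick a positive integer $s$ for which conditions~(1) and~(2) of that corollary both hold and for which the resulting exponent $1+s(Q+1)$ coincides, modulo $Q^2-1$, with the exponent displayed in the statement.

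The guiding idea is to choose $s$ so that condition~(2) becomes the assertion that an \emph{additive} polynomial permutes $\F_Q$. Condition~(2) requires $\alpha x^{ms+1}+x=\alpha x^{2s+1}+x$ to permute $\F_Q$; as a map on $\F_Q$ this depends only on $2s+1\bmod(Q-1)$, and since $Q-1$ is odd the number $2$ is a unit modulo $Q-1$, so we may choose $s$ with $2s+1\equiv 4\pmod{Q-1}$. For such an $s$ the map $x\mapsto\alpha x^{2s+1}+x$ on $\F_Q$ agrees with $x\mapsto L(x):=\alpha x^4+x$. I would then record the explicit value of $s$ and check by a short modular computation that the exponent $1+s(Q+1)$ is the one in the statement.

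There remain the two verifications. Condition~(1): modulo $Q-1$ we have $Q+1\equiv 2$, so $1+s(Q+1)\equiv 1+2s\equiv 4\pmod{Q-1}$; since $Q-1$ is odd this gives $\gcd(1+s(Q+1),Q-1)=\gcd(4,Q-1)=1$, as required. Condition~(2): since $4=2^2$ is a power of the characteristic, $L(x)=\alpha x^4+x$ is $\F_2$-linear, so it permutes the finite field $\F_Q$ if and only if it is injective, that is, if and only if $0$ is its only root in $\F_Q$. Factoring $L(x)=x(\alpha x^3+1)$ and using $-1=1$ in characteristic $2$, a nonzero root of $L$ in $\F_Q$ is precisely a solution of $x^3=\alpha^{-1}$ in $\F_Q^*$. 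Because $Q$ is a power of $4$ we have $Q\equiv 1\pmod 3$, so the cubes form a subgroup of $\F_Q^*$ of index $3$, and $x^3=\alpha^{-1}$ is solvable in $\F_Q^*$ exactly when $\alpha^{-1}$ --- equivalently $\alpha$ --- is a cube. Hence $L$ permutes $\F_Q$ if and only if $\alpha$ is not a cube, which is our hypothesis, so condition~(2) holds and Corollary~\ref{lots} gives the claim.

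The real content of the argument is the observation that the right choice of $s$ converts condition~(2) into the permutation property of the linearized polynomial $\alpha x^4+x$, after which one needs only the standard facts that an additive self-map of a finite field permutes it iff it is injective and that the nonzero cubes form the index-$3$ subgroup of $\F_{4^k}^*$. I expect the only place requiring care to be the bookkeeping that pins down $s$ and confirms the exponent; that bookkeeping is also the sole point at which the hypothesis ``$Q$ a power of $4$'' (rather than merely ``$Q$ even'') enters, namely through $Q\equiv 1\pmod 3$.
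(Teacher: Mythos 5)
Your approach is exactly the paper's: the paper applies Corollary~\ref{lots} with $m=2$ and $s=Q/2+1$, which is precisely the choice satisfying your congruence $2s+1\equiv 4\pmod{Q-1}$, and your verifications of conditions (1) and (2) --- the reduction to the additive polynomial $\alpha x^4+x$ and the cube criterion for its kernel, using $Q\equiv 1\pmod 3$ --- are correct and match the paper's. However, the one step you defer (``check by a short modular computation that the exponent $1+s(Q+1)$ is the one in the statement'') is exactly where the argument breaks, and it cannot be completed. For every $s$ one has $1+s(Q+1)\equiv 1\pmod{Q+1}$, whereas $(Q+1)Q/2=(Q+1)\cdot(Q/2)\equiv 0\pmod{Q+1}$; since $Q+1$ divides $Q^2-1$, no choice of $s$ makes $1+s(Q+1)$ congruent to $(Q+1)Q/2$ modulo $Q^2-1$, so Corollary~\ref{lots} with $m=2$ never produces the stated monomial. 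Indeed the statement is false as printed: for $Q=4$ the exponent is $10$ and $\gcd(10,15)=5$, so $\alpha x^{10}$ is not even a permutation polynomial over $\F_{16}$.

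What your argument (and the paper's) actually proves is that $\alpha x^{e}$ with $e=1+(Q/2+1)(Q+1)=\tfrac{Q^2+3Q+4}{2}=\tfrac{(Q+1)(Q+2)}{2}+1$ is a complete permutation polynomial over $\F_{Q^2}$ whenever $\alpha\in\F_Q$ is not a cube. The paper's own proof contains the arithmetic slip ``$1+s(Q+1)=(Q+1)Q/2$'' (the left side is $\tfrac{Q^2+3Q+4}{2}$, the right side is $\tfrac{Q^2+Q}{2}$), which is presumably the source of the misstated exponent. So your mathematics is sound and identical in substance to the paper's; but the ``bookkeeping'' you flagged as routine is in fact an irreparable mismatch with the exponent as stated, and a complete write-up would have had to either correct that exponent or report the discrepancy.
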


\begin{proof}
We apply Corollary~\ref{lots} with $m=2$ and $s=Q/2+1$.  Since $1+s(Q+1)=(Q+1)Q/2$ is coprime to $Q-1$, it follows that
$\alpha x^{(Q+1)Q/2}$ is a complete permutation polynomial over $\F_{Q^2}$ if and only if $\alpha x^{Q+3}+x$ permutes $\F_Q$.
This polynomial induces the same function on $\F_Q$ as does $\alpha x^4+x$.  In particular, this function is a homomorphism from
the additive group of $\F_Q$ to itself, so it is bijective if and only if its kernel is trivial, which is the case since $\alpha$ is not a cube
in $\F_Q$.
\end{proof}

\begin{remark} The open problem in \cite{WL} asked whether there exist complete permutation polynomials over $\F_{2^{2^e}}$.
The previous corollary provides complete permutation polynomials more generally over every field $\F_{2^{4d}}$.  As noted above,
one can produce many further complete permutation polynomials as consequences of Corollary~\ref{lots}.
\end{remark}

The rest of the results in this section use the permutation polynomials from Corollaries~\ref{cubic}, \ref{quartic} and \ref{quintic} to
produce complete permutation polynomials.

\begin{cor}\label{cubic2}
For $\alpha\in\F_{Q^2}^*$ and $\beta\in\F_Q$, the polynomial $f(x):=\alpha x^{Q+2}+\beta x$ is a complete
permutation polynomial over\/ $\F_{Q^2}$ if and only if one of the following holds:
\begin{enumerate}
\item $Q\equiv 5\pmod{6}$ and ${\alpha}^{Q-1}$ has order $6$;
   \item  $Q\equiv 2\pmod{6}$ and ${\alpha}^{Q-1}$ has order $3$; or
\item $Q\equiv 0\pmod{3}$ and ${\alpha}^{Q-1}=-1$.
\end{enumerate}
\end{cor}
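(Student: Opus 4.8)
The plan is to reduce the statement to Corollary~\ref{cubic}. For any $c\in\F_Q$ we have $\alpha x^{Q+2}+cx=\alpha\bigl(x^{Q+2}+(c/\alpha)x\bigr)$, and since multiplication by the nonzero constant $\alpha$ is a bijection of $\F_{Q^2}$, the polynomial $\alpha x^{Q+2}+cx$ permutes $\F_{Q^2}$ if and only if $x^{Q+2}+(c/\alpha)x$ does. I would apply this twice: once with $c=\beta$ (to analyze $f$) and once with $c=\beta+1$ (to analyze $f(x)+x=\alpha x^{Q+2}+(\beta+1)x$).

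The key point is that for $c\in\F_Q^*$ one has $c^{Q-1}=1$, so $(c/\alpha)^{Q-1}=(\alpha^{Q-1})^{-1}$; an element of $\F_{Q^2}^*$ and its inverse have the same multiplicative order and are simultaneously equal to $-1$. Hence, by Corollary~\ref{cubic} applied to $c/\alpha\in\F_{Q^2}^*$, the polynomial $x^{Q+2}+(c/\alpha)x$ permutes $\F_{Q^2}$ if and only if one of the conditions (1)--(3) holds. When $c=0$ instead, $x^{Q+2}$ permutes $\F_{Q^2}$ if and only if $\gcd(Q+2,Q^2-1)=1$; since $Q+2\equiv 3\pmod{Q-1}$ and $Q+2\equiv 1\pmod{Q+1}$, this is equivalent to $3\nmid Q-1$, which is implied by each of (1)--(3).

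To conclude, I would combine these facts, noting that since $1\neq 0$ in $\F_Q$ at most one of $\beta,\beta+1$ is zero. If $\beta\notin\{0,-1\}$, both ``$f$ permutes $\F_{Q^2}$'' and ``$f(x)+x$ permutes $\F_{Q^2}$'' are equivalent to the single disjunction (1)--(3), so $f$ is a complete permutation polynomial exactly when one of (1)--(3) holds. If $\beta=0$, then $f=\alpha x^{Q+2}$ permutes if and only if $3\nmid Q-1$ while $f(x)+x$ permutes if and only if one of (1)--(3) holds; since the latter forces $3\nmid Q-1$, the conjunction is again equivalent to one of (1)--(3). The case $\beta=-1$ is symmetric, with $f$ and $f(x)+x$ interchanged and using $(-1)^{Q-1}=1$. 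I do not expect a genuine obstacle here: the only care needed is the bookkeeping in this three-way split and the elementary observation that $\alpha^{Q-1}$ and its inverse satisfy (1)--(3) simultaneously. Alternatively, one can bypass Corollary~\ref{cubic} and rerun its proof via Theorem~\ref{main}: writing $f(x)=x\,h(x^{Q+1})$ and $f(x)+x=x\,\tilde h(x^{Q+1})$ with $h(x)=\alpha x+\beta$ and $\tilde h(x)=\alpha x+\beta+1$, the fact that $\beta\in\F_Q$ turns the two permutation conditions into the requirements that $x(\alpha x+\beta)(\alpha^Q x+\beta)$ and $x(\alpha x+\beta+1)(\alpha^Q x+\beta+1)$ permute $\F_Q$; dividing out the norm $\alpha^{Q+1}\in\F_Q^*$ then reduces each to the factorization analysis already carried out for Corollary~\ref{cubic}.
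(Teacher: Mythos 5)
Your proposal is correct and follows essentially the same route as the paper's proof: both reduce to Corollary~\ref{cubic} by factoring out $\alpha$, use that $(\gamma/\alpha)^{Q-1}=\alpha^{1-Q}$ has the same order as $\alpha^{Q-1}$ (and equals $-1$ simultaneously with it) for $\gamma\in\F_Q^*$, and dispose of the monomial case via $\gcd(Q+2,Q^2-1)=\gcd(3,Q-1)$. Your explicit three-way split on $\beta\in\{0,-1\}$ versus $\beta\notin\{0,-1\}$ is just a more spelled-out organization of the paper's observation that at least one of $f$, $f+x$ is a genuine binomial.
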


\begin{proof}
At least one polynomial in $\{f(x),f(x)+x\}$ has terms of degrees $Q+2$ and $1$, so by Corollary~\ref{cubic}
if $f(x)$ is a complete permutation polynomial then $Q\not\equiv 1\pmod{3}$.  Conversely, suppose that 
$Q\not\equiv 1\pmod{3}$.  
It follows that $\alpha x^{Q+2}$ permutes $\F_{Q^2}$: for, this is equivalent to requiring
$\gcd(Q+2,Q^2-1)=1$, and $\gcd(Q+2,Q^2-1)$ divides $\gcd(Q^2-4,Q^2-1)=\gcd(3,Q-1)=1$.
Each of $f(x)$ and $f(x)+x$ has the form $\alpha x^{Q+2}+\gamma x$ with $\gamma\in\F_Q$, and
at least one of them has $\gamma\in\F_Q^*$.  Since $\alpha x^{Q+2}+\gamma x$ permutes $\F_{Q^2}$
if and only if $x^{Q+2}+\gamma\alpha^{-1} x$ permutes $\F_{Q^2}$, and $(\gamma\alpha^{-1})^{Q-1}=
\alpha^{1-Q}$, the result now follows from Corollary~\ref{cubic}.
\end{proof}

\begin{cor}\label{quartic2}
For $\alpha\in\F_{Q^3}^*$ and $\beta\in\F_Q$, the polynomial $f(x):=\alpha x^{Q^2+Q+2}+\beta x$
is a complete permutation polynomial over\/ $\F_{Q^3}$ if and only if one of the following holds:
\begin{enumerate}
\item $Q$ is even and $\alpha^{Q^2}+\alpha^{Q^2-Q+1}+\alpha=0$;
\item $Q=7$ and $2\alpha^{24}+4\alpha^{12}+\alpha^6+1=0$ and $\beta\notin\{0,-1\}$;
\item $Q=3$ and $\alpha^{12}+\alpha^{8}+\alpha^2+1=0$ and $\beta=1$; or
\item $Q=2$ and $\alpha\ne 1$.
\end{enumerate}
\end{cor}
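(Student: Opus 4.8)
The plan is to follow the pattern of Corollary~\ref{cubic2}: write both $f(x)$ and $f(x)+x$ in the common shape $\alpha x^{Q^2+Q+2}+\gamma x$ with $\gamma\in\F_Q$, reduce each such polynomial to one of the type treated in Corollary~\ref{quartic}, and read off the answer. The first step is the reduction. By Theorem~\ref{main} with $r=1$ and $h(t)=\alpha t+\gamma$ (so that $x\,h(x^{(q-1)/(Q-1)})=\alpha x^{Q^2+Q+2}+\gamma x$, since $q=Q^3$ and $(q-1)/(Q-1)=Q^2+Q+1$), the polynomial $\alpha x^{Q^2+Q+2}+\gamma x$ permutes $\F_{Q^3}$ if and only if $x(\alpha x+\gamma)(\alpha^Q x+\gamma)(\alpha^{Q^2}x+\gamma)$ permutes $\F_Q$. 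Pulling out $\alpha\cdot\alpha^Q\cdot\alpha^{Q^2}=N(\alpha)\in\F_Q^*$ and using that $\gamma\in\F_Q$ gives $\gamma/\alpha^{Q^i}=(\gamma/\alpha)^{Q^i}$, this polynomial is $N(\alpha)$ times $x(x+\delta)(x+\delta^Q)(x+\delta^{Q^2})$ with $\delta=\gamma/\alpha$; and by Theorem~\ref{main} again the latter permutes $\F_Q$ if and only if $x^{Q^2+Q+2}+\delta x$ permutes $\F_{Q^3}$. Hence for $\gamma\in\F_Q^*$ the polynomial $\alpha x^{Q^2+Q+2}+\gamma x$ permutes $\F_{Q^3}$ exactly when $x^{Q^2+Q+2}+(\gamma/\alpha)x$ does, while for $\gamma=0$ the monomial $\alpha x^{Q^2+Q+2}$ permutes $\F_{Q^3}$ exactly when $\gcd(Q^2+Q+2,Q^3-1)=1$, which one computes to be $\gcd(4,Q-1)=1$, i.e.\ when $Q$ is even.

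With this in hand I would apply Corollary~\ref{quartic} to $\delta=\beta/\alpha$ and to $\delta=(\beta+1)/\alpha$. When $\beta\notin\{0,-1\}$ both numerators lie in $\F_Q^*$, so $f$ is a complete permutation polynomial precisely when $c\alpha^{-1}$ satisfies the relevant condition of Corollary~\ref{quartic} for each $c\in\{\beta,\beta+1\}$. The key observation is that this condition does not depend on $c\in\F_Q^*$: in case~(1) of Corollary~\ref{quartic} one has $(c\alpha^{-1})^{Q^2}+(c\alpha^{-1})^Q+c\alpha^{-1}=c\,(\alpha^{-Q^2}+\alpha^{-Q}+\alpha^{-1})$, and in the cases $Q=7$ and $Q=3$ every exponent $e$ occurring in the defining equation is a multiple of $6$, resp.\ of $2$, so $c^e=1$. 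Thus in each regime the two conditions collapse to a single equation in $\alpha^{-1}$, and multiplying through by the appropriate power of $\alpha$ turns $\alpha^{-Q^2}+\alpha^{-Q}+\alpha^{-1}=0$ into $\alpha^{Q^2}+\alpha^{Q^2-Q+1}+\alpha=0$, and similarly produces the equations displayed in cases~(2) and~(3).

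It then remains to run the case analysis on $Q$ and to handle $\beta\in\{0,-1\}$. If $Q$ is even and $Q>2$, then $\gcd(4,Q-1)=1$, so whenever $\beta=0$ or $\beta=-1$ the one of $f(x)$ and $f(x)+x$ equal to $\alpha x^{Q^2+Q+2}$ automatically permutes $\F_{Q^3}$; together with the reduction above this shows that for every $\beta\in\F_Q$, $f$ is a complete permutation polynomial if and only if $\alpha^{Q^2}+\alpha^{Q^2-Q+1}+\alpha=0$, which is case~(1). For $Q=2$ we have $x^8=x$ on $\F_8$, so $f$ acts as the linear map $x\mapsto(\alpha+\beta)x$, and a direct check gives that $f$ is a complete permutation polynomial if and only if $\alpha\ne 1$, which is case~(4). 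If $Q$ is odd with $Q\notin\{3,7\}$, then $\gcd(4,Q-1)>1$ and, by Corollary~\ref{quartic}, no polynomial $x^{Q^2+Q+2}+\delta x$ permutes $\F_{Q^3}$, so $\alpha x^{Q^2+Q+2}+\gamma x$ never permutes $\F_{Q^3}$ for any $\gamma\in\F_Q$, hence $f$ is never a complete permutation polynomial, consistent with the fact that none of (1)--(4) can hold. Finally, for $Q=7$ the equality $\gcd(4,6)=2$ forbids $\beta\in\{0,-1\}$ and the computation above yields case~(2), while for $Q=3$ the equality $\gcd(4,2)=2$ forces $\beta=1$ and yields case~(3).

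The main obstacle I anticipate is the bookkeeping around the excluded values $\beta\in\{0,-1\}$ and the small fields $Q\in\{2,3,7\}$, which Corollary~\ref{quartic} itself singles out: in each regime one must verify separately that the condition on $\alpha$ is genuinely independent of $\beta$, that $\alpha x^{Q^2+Q+2}$ does (or does not) permute $\F_{Q^3}$ as needed, and that the resulting equation matches the precise exponents in the statement. None of this is deep, but getting the case boundaries exactly right is where the care lies.
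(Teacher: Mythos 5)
Your proposal is correct and follows essentially the same route as the paper, which deduces the result from Corollary~\ref{quartic} by the scaling argument of Corollary~\ref{cubic2} together with the computation that $\gcd(Q^2+Q+2,Q^3-1)=1$ exactly when $Q$ is even. You simply make explicit the steps the paper leaves implicit (the independence of the conditions from $\gamma\in\F_Q^*$ and the translation $\alpha\mapsto\gamma/\alpha$ of the defining equations), and your detour through Theorem~\ref{main} to compare $\alpha x^{Q^2+Q+2}+\gamma x$ with $x^{Q^2+Q+2}+(\gamma/\alpha)x$ could be replaced by just factoring out the constant $\alpha$.
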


\begin{proof}
This follows from Corollary~\ref{quartic} in the same way that Corollary~\ref{cubic2} followed from Corollary~\ref{cubic}.
All that is required is to determine when $\gcd(Q^2+Q+2,Q^3-1)=1$.
Note that this gcd is even when $Q$ is odd, so we may assume that $Q$ is even.
Now $\gcd(Q^2+Q+2,Q^3-1)$ divides $(Q^2+Q+2)(Q-1)-(Q^3-1)=Q-1$ and hence divides
$Q^2+Q+2-(Q-1)(Q+2)=4$, so $\gcd(Q^2+Q+2,Q^3-1)=1$ when $Q$ is even.
\end{proof}

\begin{cor}\label{quintic2}
For ${\alpha}\in\F_{Q^2}^*$ and $\beta\in\F_Q$, the polynomial $\alpha x^{2Q+3}+\beta x$ is a complete permutation polynomial over\/ $\F_{Q^2}$
if and only if one of the following holds:
\begin{enumerate}
\item $Q\equiv\pm 2\pmod{5}$ and $\alpha^{2Q-2}-3\alpha^{Q-1}+1=0$;
\item $Q=5^n$ and ${\alpha}^{Q-1}=-1$;
\item $Q=5^n$ and  $\beta^{(Q-1)/2},(\beta+1)^{(Q-1)/2}\in\{0,-\alpha^{(Q-1)/2}\}$;
\item $Q=13$ and ${\alpha}^{12}-3{\alpha}^6+1=0$ and $\beta\in\{0,3,-4,-1\}$;
\item $Q=13$ and ${\alpha}^{12}+3{\alpha}^6+1=0$ and $\beta\in\{5,6,7\}$;
\item $Q=5$ and ${\alpha}^4 - {\alpha}^2+1=0$ and $\beta\in\{0,-1\}$;
\item $Q=5$ and ${\alpha}^4 + {\alpha}^2+1=0$ and $\beta=2$;
\item $Q=3$ and ${\alpha}^2=-1$; or
\item $Q=3$ and $\alpha+\beta=1$.
\end{enumerate}
\end{cor}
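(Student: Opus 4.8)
The plan is to reduce the claim about complete permutation polynomials $\alpha x^{2Q+3}+\beta x$ over $\F_{Q^2}$ to a pair of conditions drawn from Corollary~\ref{quintic}, exactly as Corollaries~\ref{cubic2}, \ref{quartic2} were deduced from Corollaries~\ref{cubic}, \ref{quartic}. Both $f(x):=\alpha x^{2Q+3}+\beta x$ and $f(x)+x=\alpha x^{2Q+3}+(\beta+1)x$ have the shape $\alpha x^{2Q+3}+\gamma x$ with $\gamma\in\F_Q$, and since $\alpha x^{2Q+3}+\gamma x$ permutes $\F_{Q^2}$ iff $x^{2Q+3}+\gamma\alpha^{-1}x$ does (multiply by $\alpha^{-1}$), Corollary~\ref{quintic} applies directly with the relevant parameter being $(\gamma\alpha^{-1})^{Q-1}=\gamma^{Q-1}\alpha^{1-Q}$. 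So $f$ is a complete permutation polynomial iff \emph{both} $\gamma=\beta$ and $\gamma=\beta+1$ give polynomials satisfying one of the cases of Corollary~\ref{quintic}; note the degree-$(2Q+3)$ term is genuinely present in each (since $\alpha\ne0$), and at least one of $\beta,\beta+1$ is nonzero, so we are never in the degenerate situation where $f$ or $f+x$ is linear.

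First I would dispose of the cases of Corollary~\ref{quintic} that depend only on $\alpha^{Q-1}$ and not on $\gamma$: case (1), where $\alpha^{2Q-2}-3\alpha^{Q-1}+1=0$, and the sub-case ${\alpha}^{Q-1}=-1$ of case (2). In case (1) one must check $\gcd(2Q+3,Q^2-1)=1$ so that $x^{2Q+3}$ itself permutes $\F_{Q^2}$: since $\gcd(2Q+3,Q^2-1)$ divides $2(Q^2-1)-(Q-2)(2Q+3)=-4+2=$ a small constant, a direct computation (as in the proof of Corollary~\ref{quartic2}) shows it equals $1$ precisely when $Q\not\equiv\pm2\pmod 5$ is excluded — but here $Q\equiv\pm2\pmod 5$, and $2Q+3$ is odd while $\gcd(2Q+3,Q+1)\mid 1$ and $\gcd(2Q+3,Q-1)\mid 5$, so the gcd is $1$ exactly when $5\nmid Q-1$, which holds. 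Then $\alpha x^{2Q+3}+\gamma x$ permutes $\F_{Q^2}$ for \emph{every} $\gamma\in\F_Q$, giving case (1) of the corollary with no constraint on $\beta$. Similarly when $Q=5^n$ and $\alpha^{Q-1}=-1$: here $2Q+3\equiv 3\pmod{Q-1}$ on the relevant subgroup and one checks $\gcd(2Q+3,Q^2-1)=\gcd(2Q+3,5^{2n}-1)$; since $5\mid Q$ we get $2Q+3\equiv 3\pmod 5$, coprime to $5$, and $\gcd$ with $Q^2-1$ reduces to $\gcd(3,Q^2-1)$, which is $1$ when $Q=5^n$ (as $5^n\not\equiv1\pmod 3$ iff $n$ odd — here I must be careful and check $\gcd(2Q+3,Q-1)\mid\gcd(5,Q-1)=1$ and $\gcd(2Q+3,Q+1)\mid 1$, so the full gcd is $1$ regardless of $n$). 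So again every $\gamma$ works, yielding cases (2) and (8) (the latter because for $Q=3$, $\alpha^2=-1$ means $\alpha^{Q-1}=\alpha^2=-1$).

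Next I would handle the $\gamma$-dependent cases. When $Q=5^n$, case (2) of Corollary~\ref{quintic} also includes ${\alpha}^{(Q-1)/2}=-1$, i.e. the parameter $(\gamma\alpha^{-1})^{(Q-1)/2}=-1$ when $\gamma^{(Q-1)/2}=\alpha^{(Q-1)/2}\cdot(-1)$... — one must track whether the condition is on $\alpha^{Q-1}$ or on the square-class of $\gamma\alpha^{-1}$; Corollary~\ref{quintic}(2) reads ``$\alpha^{(Q-1)/2}=-1$'' so after replacing $\alpha$ by $\gamma^{-1}\alpha$ (equivalently working with $x^{2Q+3}+\gamma\alpha^{-1}x$, whose parameter is $(\gamma\alpha^{-1})^{(Q-1)/2}$), the requirement becomes $(\gamma\alpha^{-1})^{(Q-1)/2}=-1$, i.e. $\gamma^{(Q-1)/2}=-\alpha^{(Q-1)/2}$, where $\gamma^{(Q-1)/2}\in\{0,1,-1\}$. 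Demanding this (or the ``$=-1$ version of case 2'' which we already covered, or noting $0$ handles $\gamma=0$) for both $\gamma=\beta$ and $\gamma=\beta+1$ gives the condition $\beta^{(Q-1)/2},(\beta+1)^{(Q-1)/2}\in\{0,-\alpha^{(Q-1)/2}\}$ of case (3). For $Q=13$: case (3) of the corollary is $\alpha^{12}-3\alpha^6+1=0$; replacing $\alpha$ by $\gamma\alpha^{-1}$ in Corollary~\ref{quintic}(3) gives $(\gamma\alpha^{-1})^{12}-3(\gamma\alpha^{-1})^6+1=0$, and since $\gamma^{12}=1$ for $\gamma\in\F_{13}^*$ this becomes $\alpha^{-12}-3\gamma^6\alpha^{-6}+1=0$, i.e. $1-3\gamma^6\alpha^6+\alpha^{12}=0$; for $\gamma\ne0$, $\gamma^6=\pm1$, giving either $\alpha^{12}-3\alpha^6+1=0$ or $\alpha^{12}+3\alpha^6+1=0$. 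So requiring the right thing for both $\beta,\beta+1$, and enumerating which pairs $(\beta,\beta+1)\in\F_{13}$ have $\gamma^6$ of a prescribed sign (or $\gamma=0$), produces the lists $\{0,3,-4,-1\}$ and $\{5,6,7\}$ of cases (4), (5) — this enumeration is a finite check modulo $13$ (the sixth powers in $\F_{13}^*$ are $\{1,12\}$, so $\gamma^6=1$ for $\gamma$ a quadratic residue and $=-1$ otherwise). The analogous finite enumerations over $\F_5$ and $\F_3$ give cases (6), (7), (9) (recalling for $Q=5$: $\gamma^2\in\{0,1,-1\}$, and ${\alpha}^4\mp{\alpha}^2+1=0$ correspond to the two square-classes of $\gamma$; for $Q=3$ the only cubic-relevant possibility beyond $\alpha^2=-1$ is the linear degenerate one, handled by $\alpha+\beta=1$ meaning $\gamma=\beta$ makes $f$ linear with the leading term vanishing mod $x^3-x$... actually $2Q+3=9\equiv 1\pmod{Q^2-1=8}$, so $x^9\equiv x$ on $\F_9$ and $f\equiv(\alpha+\beta)x$, which permutes iff $\alpha+\beta\ne0$, and $f+x\equiv(\alpha+\beta+1)x$, permutes iff $\alpha+\beta\ne-1$; combined: complete iff $\alpha+\beta\notin\{0,-1\}$, i.e. $\alpha+\beta=1$ in $\F_3$ — giving case (9)).

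The main obstacle will be the bookkeeping in the small-$Q$ cases $Q\in\{3,5,13\}$: one must correctly identify, for each case of Corollary~\ref{quintic}, whether its defining relation is invariant under $\alpha\mapsto\gamma\alpha^{-1}$ (it is for the ``order of $\alpha^{Q-1}$'' type conditions up to a sign-of-$\gamma^{(Q-1)/2}$ or $\gamma^{(Q-1)/\gcd}$ twist), and then intersect two shifted copies of the solution set as $\gamma$ ranges over $\{\beta,\beta+1\}$, which forces the explicit $\beta$-lists. A secondary subtlety is the gcd computations ensuring $x^{2Q+3}$ permutes $\F_{Q^2}$ in the generic cases — these are routine but must be done for each residue class of $Q$. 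I would organize the write-up as: (i) the general reduction via $\alpha\mapsto\gamma\alpha^{-1}$ and the observation that one of $\beta,\beta+1$ is nonzero; (ii) the $\gamma$-independent cases (1) and parts of (2),(8); (iii) a uniform treatment of the ``$(Q-1)/d$-th power'' twist for $Q=5^n$ giving (3); (iv) the finite verifications for $Q\in\{3,5,13\}$ giving (4)--(9), referencing the embedded Magma check as confirmation.
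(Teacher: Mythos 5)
Your proposal follows essentially the same route as the paper: reduce to Corollary~\ref{quintic} applied to $x^{2Q+3}+\gamma\alpha^{-1}x$ for $\gamma\in\{\beta,\beta+1\}$, handle $\gamma=0$ via the computation $\gcd(2Q+3,Q^2-1)=\gcd(5,Q-1)$, and track how each case of Corollary~\ref{quintic} transforms under $\alpha\mapsto\gamma\alpha^{-1}$ (invariant when the condition depends only on $\alpha^{Q-1}$, twisted by $\gamma^{(Q-1)/2}$ or $\gamma^{6}$ otherwise, with finite checks for $Q\in\{3,5,13\}$). The paper's own proof is exactly ``the same argument as Corollary~\ref{cubic2}'' plus that gcd computation, so your write-up is a correct expansion of it, modulo one self-corrected slip: $2(Q^2-1)-(Q-2)(2Q+3)$ equals $Q+4$, not a constant, but your subsequent derivation from $\gcd(2Q+3,Q+1)=1$ and $\gcd(2Q+3,Q-1)=\gcd(5,Q-1)$ is the right one.
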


\begin{proof}
This follows from Corollary~\ref{quintic} using the same argument as in the proof of Corollary~\ref{cubic2}.
Again, it suffices to determine when $\gcd(2Q+3,Q^2-1)=1$.
Note that $\gcd(2Q+3,Q^2-1)$ equals $\gcd(2Q+3,Q+1)\cdot\gcd(2Q+3,Q-1)$, and since
$2Q+3=2(Q+1)+1$ and $2Q+3=2(Q-1)+5$ we see that $\gcd(2Q+3,Q^2-1)=1$ if and only if $Q\not\equiv 1\pmod{5}$.
\end{proof}

\begin{remark}
Many further families of complete permutation polynomials can be constructed using the same methods as above.
For instance, since the inverse of a complete permutation polynomial is again a complete permutation polynomial, one
can use the inverses of the complete permutation polynomials constructed above.  These inverses take an especially simple
form in case the polynomial itself is a monomial.
\end{remark}


\section{Mutually orthogonal latin squares}

In this section we explain how the permutation polynomials constructed in this paper can be used to produce
complete sets of mutually orthogonal latin squares, and consequently to produce projective planes.

We begin by recalling the definitions.  A \emph{latin square} of order $n$ is an $n\times n$ matrix with entries from an
$n$-element set $S$, such that each element of $S$ occurs exactly once in each row and each column.  Two such
squares $L_1=[\alpha_{ij}]$ and $L_2=[\beta_{ij}]$ which have the same set $S$ are called \emph{orthogonal} if
every ordered pair in $S\times S$ occurs as $(\alpha_{ij},\beta_{ij})$ for some $i,j$.  A set of pairwise orthogonal
latin squares is called a set of mutually orthogonal latin squares (MOLS).  Any set of MOLS of order $n$ has cardinality
at most $n-1$, and a set of $n-1$ MOLS of order $n$ is called a \emph{complete} set of MOLS of order $n$.
A classical argument of Bose \cite{Bose} shows that a complete set of MOLS of order $n$ can be used to produce a
projective plane of order $n$ (see also \cite[Thm.~5.2.2]{DK}).

We will always take $S$ to be the finite
field $\F_q$, and we will label the rows and columns of our latin squares by the elements of $\F_q$.  In this case, a permutation polynomial $f$ over $\F_q$ corresponds to the latin square whose $ij$-th
entry is $i+f(j)$, and the latin squares corresponding to two permutation polynomials $f,g$ are orthogonal if and only if
$f-g$ is a permutation polynomial.  Via this correspondence, the following consequences of Corollaries~\ref{cubic}, \ref{quartic}
 and \ref{quintic} exhibit complete sets of mutually orthogonal
latin squares of order $q$.

\begin{cor} Let $Q$ be a prime power with $Q\not\equiv 1\pmod{3}$, and let $\alpha$ be any fixed element of\/ $\F_{Q^2}$
for which $\alpha^{Q-1}$ has order either $6$ (if $Q\equiv 5\pmod{6}$) or $3$ (if $Q\equiv 0\pmod{2}$) or
$2$ (if $Q\equiv 0\pmod{3}$).  Then the set of all polynomials $\beta x^{Q+2}+\alpha\gamma x$ with $\beta,\gamma\in\F_Q$,
where at least one of $\beta,\gamma$ is nonzero, corresponds to a complete set of $Q^2-1$ MOLS of order $Q^2$.
\end{cor}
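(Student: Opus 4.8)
The plan is to exploit the dictionary recalled just above: over $\F_q$ with $q=Q^2$, a permutation polynomial $f$ gives the latin square with $ij$-entry $i+f(j)$, these are latin squares precisely because $f$ permutes $\F_q$, and the squares attached to $f$ and $g$ are orthogonal exactly when $f-g$ permutes $\F_q$. So it is enough to prove the single assertion that $\beta x^{Q+2}+\alpha\gamma x$ permutes $\F_{Q^2}$ for every pair $(\beta,\gamma)$ with $\beta,\gamma\in\F_Q$ not both zero. Indeed, as $(\beta,\gamma)$ runs over all of $\F_Q\times\F_Q$ the polynomials $\beta x^{Q+2}+\alpha\gamma x$ (with $\alpha$ fixed) form an $\F_Q$-vector space, hence are closed under subtraction, and the difference of the members indexed by two distinct pairs is the member indexed by a nonzero pair. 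Thus the assertion implies simultaneously that each of our $Q^2-1$ polynomials permutes $\F_{Q^2}$, that the difference of any two distinct ones permutes $\F_{Q^2}$, and — since a permutation polynomial does not induce the constant map $0$ — that distinct pairs yield distinct latin squares. A set of $Q^2-1$ pairwise orthogonal latin squares of order $Q^2$ is complete, so this finishes the proof.

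To prove the assertion, first note that if $\beta=0$ then $\gamma\ne0$ and $\alpha\gamma x$ plainly permutes $\F_{Q^2}$. Assume $\beta\ne0$; since multiplication by $\beta$ is a bijection of $\F_{Q^2}$, the polynomial $\beta x^{Q+2}+\alpha\gamma x$ permutes $\F_{Q^2}$ if and only if $x^{Q+2}+\alpha' x$ does, where $\alpha':=\alpha\gamma\beta^{-1}$. If in addition $\gamma=0$, this is the monomial $x^{Q+2}$, which permutes $\F_{Q^2}$ precisely when $\gcd(Q+2,Q^2-1)=1$; this gcd divides $\gcd(Q^2-4,Q^2-1)$, which divides $3$, and $Q\not\equiv1\pmod3$ forces $3\nmid Q+2$, so indeed $\gcd(Q+2,Q^2-1)=1$ (as already used in the proof of Corollary~\ref{cubic2}).

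Finally suppose $\beta\ne0$ and $\gamma\ne0$. Then $\gamma\beta^{-1}\in\F_Q^*$, so $(\gamma\beta^{-1})^{Q-1}=1$ and hence $(\alpha')^{Q-1}=\alpha^{Q-1}$ has the same multiplicative order as $\alpha^{Q-1}$. It remains to see that this puts us in one of the three cases of Corollary~\ref{cubic}. Since $Q$ is a prime power with $Q\not\equiv1\pmod3$, exactly one of the following holds: $Q\equiv5\pmod6$, in which case our hypothesis says $\alpha^{Q-1}$ — hence $(\alpha')^{Q-1}$ — has order $6$, which is case $(1)$ of Corollary~\ref{cubic}; or $Q\equiv2\pmod6$ (equivalently $Q$ is even and $\equiv2\pmod3$), in which case the order is $3$, which is case $(2)$; or $Q\equiv0\pmod3$, so $Q$ is a power of $3$ and thus odd, in which case the order is $2$, i.e.\ $(\alpha')^{Q-1}=-1$, which is case $(3)$. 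In every case Corollary~\ref{cubic} yields that $x^{Q+2}+\alpha' x$ permutes $\F_{Q^2}$, proving the assertion. No serious difficulty arises here: the argument is a reduction plus bookkeeping, and the only points needing care are recording closure of the family under differences (so that everything collapses to one permutation statement) and checking that the three hypotheses on the order of $\alpha^{Q-1}$ match the three alternatives of Corollary~\ref{cubic} over all admissible $Q$, together with the separate monomial case $\gamma=0$.
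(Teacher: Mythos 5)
Your proof is correct and follows exactly the route the paper intends: the paper states this corollary without an explicit proof, presenting it as a consequence of Corollary~\ref{cubic} via the permutation-polynomial/latin-square dictionary, and your argument simply fills in those details (closure of the family under differences so that everything reduces to one permutation statement, the gcd computation for the monomial case, and the matching of the order hypotheses with the three cases of Corollary~\ref{cubic}). Nothing is missing.
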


\begin{cor} Let $Q$ be a power of $2$.  Then the set of all polynomials $\beta x^{Q^2+Q+2}+\alpha x$ with $\beta\in\F_Q$
and $\alpha^{Q^2}+\alpha^Q+\alpha=0$, where at least one of $\alpha,\beta$ is nonzero, corresponds to a complete
set of $Q^3-1$ MOLS of order $Q^3$.
\end{cor}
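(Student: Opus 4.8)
The plan is to recognize the given collection as a difference-closed family of permutation polynomials of $\F_{Q^3}$ and then invoke the translation, recalled in Section~4, between permutation polynomials over $\F_q$ and latin squares of order $q$. Write $q:=Q^3$ and, for $\beta\in\F_Q$ and $\alpha\in\F_q$, set $f_{\beta,\alpha}(x):=\beta x^{Q^2+Q+2}+\alpha x$. The parameters in the statement are $\beta\in\F_Q$ together with $\alpha$ in the kernel $K$ of the relative trace $\mathrm{Tr}\colon\F_q\to\F_Q$, $\alpha\mapsto\alpha^{Q^2}+\alpha^Q+\alpha$, with the pair $(\beta,\alpha)=(0,0)$ excluded. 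Since $\mathrm{Tr}$ is $\F_Q$-linear and surjective, $K$ is a two-dimensional $\F_Q$-subspace of $\F_q$, so $\#K=Q^2$; moreover the coefficients $\beta$ and $\alpha$ can be read off from the polynomial, so distinct parameter pairs give distinct polynomials, and the family has exactly $Q\cdot Q^2-1=q-1$ members. This is precisely the size of a complete set of MOLS of order $q$.

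The first and main step is to show that every $f_{\beta,\alpha}$ in the family permutes $\F_q$, which I would handle by cases on the vanishing of $\beta$ and $\alpha$. If $\beta=0$ then $\alpha\ne 0$ and $f_{\beta,\alpha}=\alpha x$ is a permutation. If $\beta\ne 0$ and $\alpha=0$ then $f_{\beta,\alpha}=\beta x^{Q^2+Q+2}$ is a monomial, hence permutes $\F_q$ exactly when $\gcd(Q^2+Q+2,q-1)=1$; this holds for even $Q$ by the divisibility computation already carried out in the proof of Corollary~\ref{quartic2} (the gcd divides $Q-1$, which is odd, and also divides $4$). If $\beta\ne 0$ and $\alpha\ne 0$ then $f_{\beta,\alpha}$ permutes $\F_q$ if and only if its scalar multiple $x^{Q^2+Q+2}+(\alpha/\beta)x$ does; here $\alpha/\beta\in\F_q^*$, and $\mathrm{Tr}(\alpha/\beta)=\beta^{-1}\mathrm{Tr}(\alpha)=0$ by $\F_Q$-linearity of the trace, so Corollary~\ref{quartic}(1) (together with the hypothesis that $Q$ is even) shows that this polynomial permutes $\F_q$. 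Alternatively, this last fact can be re-derived directly from Theorem~\ref{main} just as in the proof of Corollary~\ref{quartic}, using the factorization $f_{\beta,\alpha}(x)=x\cdot h(x^{(q-1)/(Q-1)})$ with $h(x)=\beta x+\alpha$.

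The second step observes that the family, together with the zero polynomial, is closed under subtraction: $f_{\beta_1,\alpha_1}-f_{\beta_2,\alpha_2}=f_{\beta_1-\beta_2,\,\alpha_1-\alpha_2}$, where $\beta_1-\beta_2\in\F_Q$ and $\alpha_1-\alpha_2\in K$ because $K$ is an $\F_Q$-subspace. Consequently, for any two distinct members $f,g$ of the family the difference $f-g$ is again a nonzero member, hence a permutation polynomial by the first step; in particular $f$ and $g$ induce distinct functions on $\F_q$, so their associated latin squares are distinct. By the correspondence recalled in Section~4 (two permutation polynomials give orthogonal latin squares precisely when their difference is a permutation polynomial), the $q-1$ latin squares attached to the family are pairwise orthogonal, so they form a complete set of MOLS of order $q$, as claimed.

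I do not anticipate a genuine obstacle: once one notices that the condition $\alpha^{Q^2}+\alpha^Q+\alpha=0$ carves out an additive subgroup, the statement becomes ``Corollary~\ref{quartic} plus linear algebra.'' The two points meriting a little care are confirming that the member count is exactly $q-1$ and that distinct parameters yield distinct functions — this is what makes the collection genuinely a complete, rather than a smaller, set — and checking that the scaling $\alpha\mapsto\alpha/\beta$ stays inside $K$, which is exactly where $\F_Q$-linearity of the trace and the containment $\beta\in\F_Q$ are used. No small-field exceptions of Corollary~\ref{quartic} intervene, since we invoke only its ``if'' direction via case~(1), valid for every even $Q$; in particular for $Q=2$ the family consists of $x^8$ together with the polynomials $x^8+\alpha x$, $\alpha\in K\setminus\{0\}$, all of which permute $\F_8$.
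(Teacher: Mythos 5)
Your proof is correct and follows exactly the route the paper intends: the corollary is stated as a consequence of Corollary~\ref{quartic}(1) via the correspondence between permutation polynomials and latin squares, and you supply precisely the needed details (the trace-zero set is an $\F_Q$-subspace of size $Q^2$, the family is closed under differences, and the monomial case reduces to the gcd computation from Corollary~\ref{quartic2}). The only quibble is the final parenthetical for $Q=2$, which omits the $\beta=0$ members $\alpha x$ of the family, but this is an aside and does not affect the argument.
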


\begin{cor} Let $Q$ be a prime power which is congruent mod $5$ to either $0$, $2$, or $-2$,
and let $\alpha$ be any fixed element of\/ $\F_{Q^2}$
for which $\alpha^{2Q-2}-3\alpha^{Q-1}+1=0$.  Then the set of all polynomials $\beta x^{2Q+3}+\alpha\gamma x$
with $\beta,\gamma\in\F_Q$, where at least one of $\beta,\gamma$ is nonzero, corresponds to a complete set of
$Q^2-1$ MOLS of order $Q^2$.
\end{cor}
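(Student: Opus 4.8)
The plan is to deduce the corollary from Corollary~\ref{quintic} together with the correspondence between permutation polynomials of $\F_q$ and latin squares of order $q$ recalled above. Let $V$ denote the set of polynomials $\beta x^{2Q+3}+\alpha\gamma x$ with $\beta,\gamma\in\F_Q$. Since $Q\ge 2$ the exponents $2Q+3$ and $1$ are distinct, and $\alpha\ne 0$ by hypothesis, so the assignment $(\beta,\gamma)\mapsto\beta x^{2Q+3}+\alpha\gamma x$ is injective; thus $V$ is a $2$-dimensional $\F_Q$-vector space whose $Q^2-1$ nonzero elements are exactly the polynomials appearing in the statement.

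The crux is the claim that \emph{every nonzero element of $V$ is a permutation polynomial of $\F_{Q^2}$}. I would prove it by cases on $f=\beta x^{2Q+3}+\alpha\gamma x$ with $(\beta,\gamma)\ne(0,0)$. If $\beta=0$, then $\gamma\ne 0$ and $f=\alpha\gamma x$, which permutes $\F_{Q^2}$. If $\gamma=0$, then $\beta\ne 0$ and $f=\beta x^{2Q+3}$, which permutes $\F_{Q^2}$ if and only if $\gcd(2Q+3,Q^2-1)=1$; by the gcd computation in the proof of Corollary~\ref{quintic2}, this holds precisely when $Q\not\equiv 1\pmod 5$, which is the case by hypothesis. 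If $\beta,\gamma\in\F_Q^*$, then $f$ permutes $\F_{Q^2}$ if and only if $\beta^{-1}f=x^{2Q+3}+\alpha'x$ does, where $\alpha':=\alpha\gamma\beta^{-1}\in\F_{Q^2}^*$; and since $\beta,\gamma\in\F_Q^*$ we have $(\alpha')^{Q-1}=\alpha^{Q-1}$, so $\alpha'$ satisfies the same relation as $\alpha$. Explicitly, when $Q\equiv\pm 2\pmod 5$ we get $(\alpha')^{2Q-2}-3(\alpha')^{Q-1}+1=\alpha^{2Q-2}-3\alpha^{Q-1}+1=0$, and when $Q=5^n$ the relation $\alpha^{2Q-2}-3\alpha^{Q-1}+1=0$ rewrites as $(\alpha^{Q-1}+1)^2=0$ in characteristic $5$, forcing $(\alpha')^{Q-1}=\alpha^{Q-1}=-1$. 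In either case Corollary~\ref{quintic} shows that $x^{2Q+3}+\alpha'x$ permutes $\F_{Q^2}$, which proves the claim.

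Granting the claim, the rest is formal. If $f,g\in V$ induce the same function on $\F_{Q^2}$, then $f-g\in V$ induces the zero function; but a nonzero element of $V$ is a permutation polynomial and hence induces a bijection, not the zero map, so $f=g$. Therefore the $Q^2-1$ polynomials of the statement correspond to $Q^2-1$ distinct latin squares of order $Q^2$. For distinct such $f,g$, the difference $f-g$ is a nonzero element of $V$, hence a permutation polynomial, so by the stated correspondence the latin squares attached to $f$ and $g$ are orthogonal. Thus these $Q^2-1$ latin squares are mutually orthogonal, and since a set of MOLS of order $n$ has at most $n-1$ members, this set is complete.

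The only genuinely non-formal step is the last case of the crux — checking that $\alpha'=\alpha\gamma\beta^{-1}$ inherits the hypothesis imposed on $\alpha$. This hinges on the two elementary observations that $(\alpha')^{Q-1}=\alpha^{Q-1}$ when $\beta,\gamma\in\F_Q^*$, and that $x^2-3x+1=(x+1)^2$ in characteristic $5$. Once these are noted, everything reduces to Corollary~\ref{quintic}, the gcd computation already done for Corollary~\ref{quintic2}, and the permutation-polynomial/latin-square dictionary.
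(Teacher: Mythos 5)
Your proof is correct and is exactly the argument the paper intends: the paper states this corollary without proof, asserting only that it follows from Corollary~\ref{quintic} via the permutation-polynomial/latin-square correspondence, and you have supplied precisely the missing details (the vector-space structure of the family, the gcd computation for the monomial case, and the observation that the defining condition on $\alpha$ depends only on $\alpha^{Q-1}$ and hence passes to $\alpha\gamma\beta^{-1}$, with the characteristic-$5$ identity $x^2-3x+1=(x+1)^2$ handling case (2) of Corollary~\ref{quintic}). No gaps.
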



\section{The results of Xu, Cao, Tu, Zeng and Hu}

In this section we show how our results imply all three main results in \cite{XCTZH}, as
well as three of the four main results in \cite{TZH}.  We note that the proofs in \cite{TZH} and \cite{XCTZH}
involve lengthy computations based on a completely different method than the one in the present note.

The first main result of \cite{XCTZH} is \cite[Thm.~3.1]{XCTZH}, which follows from the special case of
Corollary~\ref{cubic2} in which $\beta=0$ and $Q=3^j$ with $j$ odd.  Note that Corollary~\ref{cubic2} exhibits complete
permutation polynomials for every prime power $Q$ such that $Q\not\equiv 1\pmod{3}$.

Likewise, \cite[Thm.~1]{TZH} follows from the special case of Corollary~\ref{quartic2} in which $\beta=0$ and $Q=2^j$ with
$\gcd(j,3)=1$.  Note that Corollary~\ref{quartic2} exhibits complete permutation polynomials whenever $Q$ is a power of $2$.

Next, \cite[Thm.~2]{XCTZH} and \cite[Thm.~3.3]{XCTZH} follow from the special cases of Corollary~\ref{quintic2} in which
$\beta=0$ and either $Q=2^j$ or $Q=3^j$.  Note that Corollary~\ref{quintic} exhibits complete permutation polynomials for every prime power $Q$
which is congruent to $0$, $2$ or $3$ (mod~$5$).
%

The third main result of \cite{XCTZH} is an immediate consequence of the following result from our previous paper \cite[Cor.~5.3]{Z4}
(which itself is a simple consequence of Lemma~\ref{lem}):

\begin{lemma} \label{genlem} Let $Q$ be a prime power, let $r$ and $d$ be positive integers, and let $\beta$ be a
$(Q+1)$-th root of unity in\/ $\F_{Q^2}$.  Then $x^{r+d(Q-1)}+\beta^{-1} x^r$ permutes\/ $\F_{Q^2}$
if and only if all of the following hold:
\begin{enumerate}
\item $\gcd(r,Q-1)=1$
\item $\gcd(r-d,Q+1)=1$
\item $(-\beta)^{(Q+1)/\gcd(Q+1,d)}\ne 1$.
\end{enumerate}
\end{lemma}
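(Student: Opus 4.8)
The plan is to apply Lemma~\ref{lem} with $q=Q^2$ and $s=Q+1$, so that $(q-1)/s=Q-1$. Writing $h(x):=x^d+\beta^{-1}\in\F_{Q^2}[x]$, we have $x^r h(x^{(q-1)/s})=x^{r+d(Q-1)}+\beta^{-1}x^r$, so Lemma~\ref{lem} says this polynomial permutes $\F_{Q^2}$ if and only if (i) $\gcd(r,Q-1)=1$ and (ii) $\phi(x):=x^r h(x)^{Q-1}$ permutes the group $\mu_{Q+1}$ of $(Q+1)$-th roots of unity in $\F_{Q^2}^*$. Condition (i) is exactly condition~(1) of the lemma, so it remains to show that (ii) is equivalent to the conjunction of conditions~(2) and~(3).

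The crux is a short computation of $h(\zeta)^{Q-1}$ for $\zeta\in\mu_{Q+1}$. Here $\zeta^Q=\zeta^{-1}$, and likewise $\beta^Q=\beta^{-1}$ since $\beta\in\mu_{Q+1}$; applying the Frobenius $a\mapsto a^Q$ of $\F_{Q^2}$ gives $h(\zeta)^Q=\zeta^{-d}+\beta$. Hence, provided $h(\zeta)\ne 0$,
\[
h(\zeta)^{Q-1}=\frac{h(\zeta)^Q}{h(\zeta)}=\frac{\zeta^{-d}+\beta}{\zeta^d+\beta^{-1}}
 =\frac{\zeta^{-d}(1+\beta\zeta^d)}{\beta^{-1}(1+\beta\zeta^d)}=\beta\zeta^{-d}.
\]
Thus, if $h$ has no root in $\mu_{Q+1}$, then $\phi$ restricts on $\mu_{Q+1}$ to the map $\zeta\mapsto\beta\zeta^{r-d}$; since multiplication by $\beta\in\mu_{Q+1}$ is a bijection of $\mu_{Q+1}$ and $\mu_{Q+1}$ is cyclic of order $Q+1$, this permutes $\mu_{Q+1}$ if and only if $\gcd(r-d,Q+1)=1$, i.e.\ condition~(2). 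If instead $h(\zeta_0)=0$ for some $\zeta_0\in\mu_{Q+1}$, then $\phi(\zeta_0)=0\notin\mu_{Q+1}$, so $\phi$ is not a permutation of $\mu_{Q+1}$ and the polynomial does not permute $\F_{Q^2}$.

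It remains to identify "$h$ has no root in $\mu_{Q+1}$" with condition~(3). A root of $h$ in $\mu_{Q+1}$ is a $\zeta\in\mu_{Q+1}$ with $\zeta^d=-\beta^{-1}$; note $-\beta^{-1}\in\mu_{Q+1}$, since $-1\in\mu_{Q+1}$ (because $Q+1$ is even when $Q$ is odd, and the characteristic is $2$ otherwise). In the cyclic group $\mu_{Q+1}$, the image of the $d$-th power map is the subgroup of order $(Q+1)/\gcd(Q+1,d)$, so such a $\zeta$ exists exactly when $(-\beta^{-1})^{(Q+1)/\gcd(Q+1,d)}=1$; taking reciprocals, this is $(-\beta)^{(Q+1)/\gcd(Q+1,d)}=1$. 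So condition~(3) is precisely the statement that $h$ has no root in $\mu_{Q+1}$, and combining the three cases above gives the claimed equivalence. I do not expect a serious obstacle here: the computation $h(\zeta)^{Q-1}=\beta\zeta^{-d}$ is the only real content, and beyond it the only thing needing care is the bookkeeping around the excluded root—namely that when condition~(3) fails the polynomial genuinely fails to permute $\F_{Q^2}$ regardless of conditions~(1) and~(2), which is immediate since condition~(ii) of Lemma~\ref{lem} already fails. This is the same reduction used in the proofs of Corollaries~\ref{cubic}--\ref{quintic}.
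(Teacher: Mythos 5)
Your proposal is correct, and it follows exactly the route the paper indicates: the paper does not prove Lemma~\ref{genlem} itself but cites it from \cite[Cor.~5.3]{Z4} as ``a simple consequence of Lemma~\ref{lem},'' which is precisely your reduction (with the key computation $h(\zeta)^{Q-1}=\beta\zeta^{-d}$ on $\mu_{Q+1}$ and the correct handling of the case where $h$ vanishes on $\mu_{Q+1}$). No gaps.
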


By restricting to the case $r=1$, we obtain the following complete permutation polynomials from this result:

\begin{cor} Let $Q$ be a prime power, let $d$ be a positive integer, and let $\beta$ be a $(Q+1)$-th root of unity in\/ $\F_{Q^2}$.
Then $\beta x^{1+d(Q-1)}$ is a complete permutation polynomial over\/ $\F_{Q^2}$ if and only if all of the following hold:
\begin{enumerate}
\item $\gcd((d-1)(2d-1),Q+1)=1$
\item $(-\beta)^{(Q+1)/\gcd(Q+1,d)}\ne 1$.
\end{enumerate}
\end{cor}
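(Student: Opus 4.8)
The plan is to unwind the definition of a complete permutation polynomial and then apply Lemma~\ref{genlem}. By definition, $f(x):=\beta x^{1+d(Q-1)}$ is a complete permutation polynomial over $\F_{Q^2}$ precisely when both $f(x)$ and $f(x)+x$ permute $\F_{Q^2}$, so I would handle these two conditions separately and then combine the resulting constraints.

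For $f(x)$ itself: since $\beta\ne 0$, the monomial $\beta x^{1+d(Q-1)}$ permutes $\F_{Q^2}$ if and only if $\gcd(1+d(Q-1),Q^2-1)=1$. Writing $Q^2-1=(Q-1)(Q+1)$, I would observe that $1+d(Q-1)\equiv 1\pmod{Q-1}$, so its gcd with $Q-1$ is automatically $1$, while $1+d(Q-1)\equiv 1-2d\pmod{Q+1}$ because $Q-1\equiv-2\pmod{Q+1}$; hence $f(x)$ permutes $\F_{Q^2}$ if and only if $\gcd(2d-1,Q+1)=1$.

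For $f(x)+x$: I would factor $f(x)+x=\beta\bigl(x^{1+d(Q-1)}+\beta^{-1}x\bigr)$, and since multiplying by the nonzero constant $\beta$ preserves the property of being a permutation polynomial, $f(x)+x$ permutes $\F_{Q^2}$ if and only if $x^{1+d(Q-1)}+\beta^{-1}x$ does. This is exactly the polynomial in Lemma~\ref{genlem} with $r=1$, the lemma's ``$\beta$'' being our $\beta$, which is indeed a $(Q+1)$-th root of unity as the lemma requires. Applying Lemma~\ref{genlem}, $f(x)+x$ permutes $\F_{Q^2}$ if and only if $\gcd(1,Q-1)=1$ (automatic), $\gcd(1-d,Q+1)=1$, and $(-\beta)^{(Q+1)/\gcd(Q+1,d)}\ne 1$.

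Finally I would combine the conditions: $\gcd(2d-1,Q+1)=1$ together with $\gcd(d-1,Q+1)=1$ is equivalent to $\gcd\bigl((d-1)(2d-1),Q+1\bigr)=1$, which is condition (1) of the corollary, while $(-\beta)^{(Q+1)/\gcd(Q+1,d)}\ne 1$ is condition (2). I do not anticipate any genuine obstacle; the only points requiring a little care are the elementary number theory in the second step (identifying $\gcd(1+d(Q-1),Q^2-1)$ with $\gcd(2d-1,Q+1)$) and the bookkeeping that condition (1) of Lemma~\ref{genlem} becomes vacuous when $r=1$, so that it contributes no extra hypothesis.
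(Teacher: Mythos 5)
Your proposal is correct and matches the paper's intended argument: the paper states this corollary as the immediate specialization of Lemma~\ref{genlem} to $r=1$, and your write-up simply supplies the routine details (the gcd computation showing the monomial $\beta x^{1+d(Q-1)}$ permutes $\F_{Q^2}$ iff $\gcd(2d-1,Q+1)=1$, and the factoring of $f(x)+x$ as $\beta\bigl(x^{1+d(Q-1)}+\beta^{-1}x\bigr)$ to invoke the lemma). All the steps check out, including the combination of the two gcd conditions into $\gcd\bigl((d-1)(2d-1),Q+1\bigr)=1$.
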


By restricting to the special case that $2d\mid (Q+1)$, we obtain a stronger version of \cite[Thm.~3.5]{XCTZH}.
By restricting to the special case that $Q$ is even and $d=Q/4+1$, we obtain a stronger version of \cite[Thm.~3]{TZH}.

The results in this paper do not imply \cite[Thm.~4]{TZH}.  That result was proved by using a particular case of a generalization
of Lemma~\ref{lem} from \cite{AGW}.  Whereas the proof of Lemma~\ref{lem}  involves the multiplicative group of $\F_q$, the proof of \cite[Thm.~4]{TZH} involves instead the additive group of $\F_q$.  It would be good to understand the general class of
all permutation polynomials which can be produced by variants of the method used to prove \cite[Thm.~4]{TZH}.



\begin{thebibliography}{1}

\bibitem{AGW} A. Akbary, D. Ghioca and Q. Wang, On constructing permutations of finite fields,
\textit{Finite Fields Appl.} \textbf{17} (2011), 51--67.

\bibitem{Bose} R. C. Bose, On the application of the properties of Galois fields to the construction of
hyper-Graeco-Latin squares, \textit{Sankhy{\={a}}} \textbf{3} (1938), 323--338.

\bibitem{CH} L. Carlitz and D. R. Hayes, Permutations with coefficients in a subfield,
\textit{Acta Arith.} \textbf{21} (1972), 131--135.

\bibitem{DK} J. D\'enes and A. D. Keedwell, Latin Squares and their Applications,
Academic Press, New York, 1974.

\bibitem{Di} L. E. Dickson, The analytic representation of substitutions on a power of a prime number of letters with a discussion
of the linear group, \textit{Ann. of Math.} \textbf{11} (1896/97), 65--120.

\bibitem{FGM} R. J. Friedlander, B. Gordon and M. D. Miller, On a group sequencing problem of Ringel,
in: Proceedings of the Ninth Southeastern Conference on Combinatorics, Graph Theory, and Computing, 307--321,
Utilitas Math., Winnipeg, 1978.

\bibitem{FGT} R. J. Friedlander, B. Gordon and P. Tannenbaum, Partitions of groups and complete mappings,
\textit{Pacific J. Math.} \textbf{92} (1981), 283--293.

\bibitem{Mann} H. B. Mann, The construction of orthogonal latin squares, \textit{Ann. Math. Statist.} \textbf{13} (1942), 418--423.

\bibitem{MZ}
A. M. Masuda and M. E. Zieve, Permutation binomials over finite fields,
\textit{Trans. Amer. Math. Soc.} \textbf{361} (2009), 4169--4180.

\bibitem{NR}
H. Niederreiter and K. H. Robinson, Bol loops of order $pq$, \textit{Math. Proc. Cambride Philos. Soc.} \textbf{89} (1981), 241--256.

\bibitem{Paige}
L. J. Paige, Neofields, \textit{Duke Math. J.} \textbf{16} (1949), 39--60.

\bibitem{TZH}
Z. Tu, X. Zeng and L. Hu, Several classes of complete permutation polynomials,
\textit{Finite Fields Appl.} \textbf{25} (2014), 182--193.

\bibitem{TZ}
T. J. Tucker and M. E. Zieve, Permutation polynomials, curves without points, and Latin squares,
preprint, 2000.

\bibitem{WL}
B. Wu and D. Lin, On constructing complete permutation polynomials over finite fields of even characteristic,
arXiv:1310.4358v2 [math.NT], 29 Oct 2013.

\bibitem{XCTZH}
G. Xu, X. Cao, Z. Tu, X. Zeng and L. Hu, Complete permutation polynomials over finite fields of odd characteristic,
arXiv:1312.0930v1 [math.NT], 1 Dec 2013.

\bibitem{Z1}
M. E. Zieve, Some families of permutation polynomials over finite fields,
\textit{Internat. J. Number Theory} \textbf{4} (2008), 851--857.

\bibitem{Z2}
M. E. Zieve, On some permutation polynomials over $\F_q$ of the form $x^r h(x^{(q-1)/d})$,
\textit{Proc. Amer. Math. Soc.}  \textbf{137} (2009), 2209--2216.

\bibitem{Z3}
M. E. Zieve, Classes of permutation polynomials based on cyclotomy and an additive analogue,
in \textit{Additive Number Theory}, Springer (2010), 355--359.

\bibitem{Z4}
M. E. Zieve, Permutation polynomials on $\F_q$ induced from R\'edei function bijections on subgroups of $\F_q^*$,
arXiv:1310.0776v2 [math.NT], 7 Oct 2013.

\end{thebibliography}
\end{document}